\newtheorem{theorem}{Theorem}[section]
\newtheorem{proposition}[theorem]{Proposition}
\newtheorem{lemma}[theorem]{Lemma}
\newtheorem{corollary}[theorem]{Corollary}
\newtheorem{claim}{Claim}
\theoremstyle{definition}
\newtheorem{definition}[theorem]{Definition}
\newtheorem{exm}[theorem]{Example}
\theoremstyle{remark}
\newtheorem{remark}[theorem]{Remark}
\newcommand{\lcc}{\operatorname{lcc}}
\newcommand{\bydef}{\mathrel{\mathop:}=}
\newcommand{\op}{\operatorname{op}}
\newcommand{\id}{\operatorname{id}}
\newcommand{\Rad}{\operatorname{Rad}}
\newcommand{\supp}{\operatorname{supp}}
\newcommand{\BB}{\mathcal{B}\!\operatorname{oole}}
\newcommand{\Set}{\mathcal{S}\!\operatorname{et}}
\newcommand{\fwp}{\mathscr{F}}
\newcommand{\Q}{\mathbb{Q}}
\newcommand{\MV}{\mathcal{MV}}
\newcommand{\sk}{\operatorname{Sk}}
\renewcommand{\wp}{\mathscr{P}}
\newcommand{\TMV}{{}^{\operatorname{MV}}\!\mathcal{T}\!\!\operatorname{op}}
\renewcommand{\Top}{\mathcal{T}\!\!\operatorname{op}}
\newcommand{\Fuz}{\mathcal{F}\!\!\operatorname{uz}}
\newcommand{\SMV}{{}^{\operatorname{MV}}\!\!\mathcal{S}\!\operatorname{tone}}
\newcommand{\MVs}{\mathcal{MV}^{\operatorname{ss}}}
\newcommand{\MVlcc}{\mathcal{MV}^{\operatorname{lcc}}}
\newcommand{\LFuz}{{}^{\operatorname{L}}\!\mathcal{F}\!\!\operatorname{uz}}
\newcommand{\HCAFuz}{{}^{\operatorname{HC}}\!\mathcal{AF}\!\!\operatorname{uz}}
\newcommand{\LTMV}{{}^{\operatorname{LMV}}\!\mathcal{T}\!\!\operatorname{op}}
\newcommand{\CTMV}{{}^{\operatorname{CMV}}\!\mathcal{T}\!\!\operatorname{op}}
\newcommand{\HCTMV}{{}^{\operatorname{HCMV}}\!\mathcal{T}\!\!\operatorname{op}}
\newcommand{\HCTop}{{}^{\operatorname{HC}}\!\mathcal{T}\!\!\operatorname{op}}
\renewcommand{\phi}{\varphi}
\newcommand{\fcou}{^{{\rotatebox[origin=c]{180}{$\rightsquigarrow$}}}}
\newcommand{\restr}{\!\!\upharpoonright}
\newcommand{\la}{\langle}
\newcommand{\ra}{\rangle}
\newcommand{\lto}{\longrightarrow}
\newcommand{\lmapsto}{\longmapsto}
\newcommand{\ov}{\overline}
\newcommand{\0}{\mathbf{0}}
\renewcommand{\1}{\mathbf{1}}
\def\amslatex\slash{{\protect\AmS-\protect\LaTeX}}
\begin{document}

\title{Compactness in MV-Topologies: Tychonoff Theorem and Stone-\v Cech Compactification}

\author{\renewcommand{\thefootnote}{\arabic{footnote}}
\rm Luz Victoria De La Pava\footnotemark[1] \and
\renewcommand{\thefootnote}{\arabic{footnote}}
\rm Ciro Russo\footnotemark[2]}

\maketitle
\footnotetext[1]{Departamento de Matem\'aticas, Universidad del Valle -- Cali, Colombia. \\ {\tt victoria.delapava@correounivalle.edu.co}}
\footnotetext[2]{Departamento de Matem\'atica, Universidade Federal da Bahia -- Salvador, Bahia, Brazil. \\ {\tt ciro.russo@ufba.br}}
\today

\begin{abstract}
In this paper, we discuss some questions about compactness in MV-topological spaces. More precisely, we first present a Tychonoff theorem for such a class of fuzzy topological spaces and some consequence of this result, among which, for example, the existence of products in the category of Stone MV-spaces and, consequently, of coproducts in the one of limit cut complete MV-algebras. Then we show that our Tychonoff theorem is equivalent, in ZF, to the Axiom of Choice, classical Tychonoff theorem, and Lowen's analogous result for lattice-valued fuzzy topology. Last, we show an extension of the Stone-\v Cech compactification functor to the category of MV-topological spaces, and we discuss its relationship with previous works on compactification for fuzzy topological spaces.
\end{abstract}

\section{Introduction}
\label{intro}

MV-topological spaces were introduced by the second author with the aim of extending Stone duality to semisimple MV-algebras \cite{rus2}. In the same paper, several basic notions and results of general topology have been extended to such a class of fuzzy topologies. The results presented there indicate that MV-topological spaces constitute a pretty well-behaved fuzzy generalization of classical topological spaces. On the algebraic side, the class of algebras which play for MV-topologies the role that Boolean algebras play for classical topologies is the one of \emph{limit cut complete} (\emph{lcc}, for short) MV-algebras.

In this paper, we study some important questions about compactness in the category of MV-topological spaces. First of all, we recall some basic definitions and results on the topic, and we describe how MV-topologies behave with respect to some important and well-known functors between the categories of fuzzy topological spaces and classical ones. Then we characterize product MV-topologies by means of the concepts of \emph{subbase} and \emph{large subbase}.

Once all the preliminaries are settled, we present two proofs of Tychonoff theorem for MV-topologies, i.e., of the fact that compactness is preserved by product in this category (Theorem \ref{tych}). The first one follows from Lemma \ref{alex}, i.e., an analogous of Alexander Subbase Lemma, similarly to the classical case, while the other one is an easy consequence of Theorems \ref{ultra} and \ref{low333}.

In Section \ref{cons} we present several consequences of our Tychonoff theorem, among which the fact that the category of Stone MV-spaces has products and, consequently, the one of lcc MV-algebras has coproducts. We also discuss some properties of coproducts of lcc MV-algebras and the relationship of our main result with the Axiom of Choice and with Lowen's corresponding theorem for lattice-valued fuzzy topologies \cite{low2}.

In the last section, we use some categorical results proved by Cerruti in order to show the existence of a Stone-\v Cech Compactification for each MV-topological space. More precisely, using the properties of the functors $\iota$ and $\omega$ discussed in the previous sections, we show that the compactification functor $\hat\beta$, defined by Cerruti for a larger category, once restricted to MV-topologies, boasts properties that are similar to those of the Stone-\v Cech compactification of classical topological spaces. Moreover, we prove that the compactification of each topologically generated MV-space is completely determined by the Stone-\v Cech compactification of its initial topology.

For any notion or result on MV-topology not explicitly reported here we refer the reader to \cite{rus2}, while standard references for MV-algebras are \cite{mvbook} and \cite{mubook}. We also refer the reader to the comprehensive overviews of earlier works on fuzzy topologies presented in \cite{sos} and \cite{liu}, and to a survey on Tychonoff-type theorems for fuzzy topological spaces \cite{carlson}.

\section{Preliminaries.}
\label{pre}

In this section we shall recall basic notions and results on MV-topological spaces, mainly from \cite{rus2}.

Both crisp and fuzzy subsets of a given set will be identified with their membership functions and usually denoted by lower case latin or greek letters. In particular, for any set $X$, we shall use also $\1$ and $\0$ for denoting, respectively, $X$ and $\varnothing$. In some cases, we shall use capital letters in order to emphasize that the subset we are dealing with is crisp.

We recall that an MV-topological space is basically a special fuzzy topological space in the sense of C. L. Chang \cite{chal}. Moreover, most of the definitions and results of the present section are simple adaptations of the corresponding ones of the aforementioned work to the present context or directly derivable from the same work or from the results presented in the papers \cite{hoh1,hoh2,hoh3,low,rod1,rod2,sto1,sto2}.

\begin{definition}\label{mvtop}
Let $X$ be a set, $A$ the MV-algebra $[0,1]^X$ and $\tau \subseteq A$. We say that $\la X, \tau\ra$ is an \emph{MV-topological space} (or \emph{MV-space}) if $\tau$ is a subuniverse both of the quantale $\la [0,1]^X, \bigvee, \oplus\ra$ and of the semiring $\la [0,1]^X, \wedge, \odot, \1\ra$. More explicitly, $\la X, \tau\ra$ is an MV-topological space if
\begin{enumerate}[(i)]
\item $\0, \1 \in \tau$,
\item for any family $\{o_i\}_{i \in I}$ of elements of $\tau$, $\bigvee_{i \in I} o_i \in \tau$,
\end{enumerate}
and, for all $o_1, o_2 \in \tau$,
\begin{enumerate}[(i)]
\setcounter{enumi}{2}
\item $o_1 \odot o_2 \in \tau$,
\item $o_1 \oplus o_2 \in \tau$,
\item $o_1 \wedge o_2 \in \tau$.
\end{enumerate}
$\tau$ is also called an \emph{MV-topology} on $X$ and the elements of $\tau$ are the \emph{open MV-subsets} of $X$. The set $\xi = \{o^* \mid o \in \tau\}$ is easily seen to be a subquantale of $\la [0,1]^X, \bigwedge, \odot\ra$ (where $\bigwedge$ has to be considered as the join w.r.t. to the dual order $\geq$ on  $[0,1]^X$) and a subsemiring of $\la [0,1]^X, \vee, \oplus, \0\ra$, i.e., it verifies the following properties:
\begin{itemize}
\item[$-$] $\0, \1 \in \xi$,
\item[$-$] for any family $\{c_i\}_{i \in I}$ of elements of $\xi$, $\bigwedge_{i \in I} c_i \in \xi$,
\item[$-$] for all $c_1, c_2 \in \xi$, $c_1 \odot c_2, c_1 \oplus c_2, c_1 \vee c_2 \in \xi$.
\end{itemize}
The elements of $\xi$ are called the \emph{closed MV-subsets} of $X$.
\end{definition}

Let $X$ and $Y$ be sets. Any function $f: X \lto Y$ naturally defines a map
\begin{equation}\label{muf}
\begin{array}{cccc}
f\fcou: & [0,1]^Y & \lto 		& [0,1]^X \\
		 & \alpha  & \lmapsto & \alpha \circ f.
\end{array}
\end{equation}
Obviously $f\fcou(\0) = \0$; moreover, if $\alpha, \beta \in [0,1]^Y$, for all $x \in X$ we have $f\fcou(\alpha \oplus \beta)(x) = (\alpha \oplus \beta)(f(x)) = \alpha(f(x)) \oplus \beta(f(x)) = f\fcou(\alpha)(x) \oplus f\fcou(\beta)(x)$ and, analogously, $f\fcou(\alpha^*) = f\fcou(\alpha)^*$. Then $f\fcou$ is an MV-algebra homomorphism and we shall call it the \emph{MV-preimage} of $f$. The reason of such a name is essentially the fact that $f\fcou$ can be seen as the preimage, via $f$, of the fuzzy subsets of $Y$. From a categorical viewpoint, once denoted by $\Set$, $\BB$ and $\MV$ the categories of sets, Boolean algebras, and MV-algebras respectively (with the obvious morphisms), there exist two contravariant functors $\wp: \Set \lto \BB^{\op}$ and $\fwp: \Set \lto \MV^{\op}$ sending each map $f: X \lto Y$, respectively, to the Boolean algebra homomorphism $f^{-1}: \wp(Y) \lto \wp(X)$ and to the MV-homomorphism $f\fcou: [0,1]^Y \lto [0,1]^X$.

Moreover, for any map $f: X \lto Y$ we define also a map $f^\to: [0,1]^X \lto [0,1]^Y$ by setting, for all $\alpha \in [0,1]^X$ and for all $y \in Y$,
\begin{equation}\label{ovf}
f^\to(\alpha)(y) = \bigvee_{f(x) = y} \alpha(x).
\end{equation}
Clearly, if $y \notin f[X]$, $f^\to(\alpha)(y) = \bigvee \varnothing = \0$ for any $\alpha \in [0,1]^X$.

\begin{definition}\cite{chal}\label{cont}
Let $\la X, \tau_X \ra$ and $\la Y, \tau_Y \ra$ be two MV-topological spaces. A map $f: X \lto Y$ is said to be
\begin{itemize}
\item \emph{continuous} if $f\fcou[\tau_Y] \subseteq \tau_X$,
\item \emph{open} if $f^\to(o) \in \tau_Y$ for all $o \in \tau_X$,
\item \emph{closed} if $f^\to(c) \in \xi_Y$ for all $c \in \xi_X$,
\item an \emph{MV-homeomorphism} if it is bijective and both $f$ and $f^{-1}$ are continuous.
\end{itemize}
\end{definition}
We can use the same words of the classical case because, as it is trivial to verify, if a map between two classical topological spaces is continuous, open, or closed in the sense of the definition above, then it has the same property in the classical sense.

\begin{definition}\cite{war}\label{base}
As in classical topology, we say that, given an MV-topological space $\la X, \tau\ra$, a subset $B$ of $[0,1]^X$ is called a \emph{base} for $\tau$ if $B\subseteq \tau$ and every open set of $(X,\tau)$ is a join of elements of $B$.
\end{definition}

\begin{lemma}\cite{rus2}\label{bascont}
Let $\la X, \tau_X\ra$ and $\la Y, \tau_Y\ra$ be two MV-topological spaces and let $B$ be a base for $\tau_Y$. A map  $f: X \lto Y$ is continuous if and only if $f\fcou[B] \subseteq \tau_X$.
\end{lemma}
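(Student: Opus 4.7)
The plan is to mimic the classical topology proof, exploiting the fact that $f\fcou$ is defined pointwise via precomposition with $f$, so it automatically commutes with pointwise suprema in $[0,1]^X$.

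The forward implication is essentially definitional: if $f$ is continuous, then by Definition~\ref{cont} we have $f\fcou[\O_Y] \subseteq \O_X$, and since a base is by Definition~\ref{base} a subset of $\O_Y$, the inclusion $f\fcou[\Theta] \subseteq f\fcou[\O_Y] \subseteq \O_X$ is immediate. No real work is needed here.

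For the nontrivial direction, I assume $f\fcou[\Theta] \subseteq \O_X$ and take an arbitrary $o \in \O_Y$. By the definition of base, there exists a family $\{\theta_i\}_{i \in I} \subseteq \Theta$ with $o = \bigvee_{i \in I} \theta_i$. The key observation is that $f\fcou$ preserves arbitrary joins: for every $x \in X$,
\[
f\fcou\!\left(\bigvee_{i \in I} \theta_i\right)\!(x) = \left(\bigvee_{i \in I} \theta_i\right)\!(f(x)) = \bigvee_{i \in I} \theta_i(f(x)) = \bigvee_{i \in I} f\fcou(\theta_i)(x),
\]
since joins in $[0,1]^X$ and $[0,1]^Y$ are computed pointwise. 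Hence $f\fcou(o) = \bigvee_{i \in I} f\fcou(\theta_i)$, with every $f\fcou(\theta_i) \in \O_X$ by hypothesis. Closure of $\O_X$ under arbitrary joins (Definition~\ref{mvtop}(ii)) then gives $f\fcou(o) \in \O_X$, so $f\fcou[\O_Y] \subseteq \O_X$ and $f$ is continuous.

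There is no real obstacle: the proof is a direct adaptation of the classical argument, and the only point worth stressing is that, although $f\fcou$ was previously noted to be merely an MV-algebra homomorphism (hence a priori only a finitary operation preserver), its concrete description as $\alpha \mapsto \alpha \circ f$ makes preservation of arbitrary pointwise suprema automatic. Neither the $\oplus$, $\odot$, nor $\wedge$ closure of $\O_X$ (items (iii)--(v) of Definition~\ref{mvtop}) needs to be invoked, exactly as in the classical setting.
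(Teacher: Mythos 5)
Your proof is correct: the forward direction is definitional, and the backward direction correctly uses that $f\fcou$, being precomposition with $f$, preserves arbitrary pointwise joins, so that $f\fcou(o)=\bigvee_i f\fcou(\theta_i)\in\O_X$ by closure of $\O_X$ under arbitrary joins. The paper itself states this lemma without proof (it is quoted from \cite{rus2}), and your argument is exactly the standard one expected there, so there is nothing to add.
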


A \emph{covering} of $X$ is any subset $\Gamma$ of $[0,1]^X$ such that $\bigvee \Gamma = \1$ \cite{chal}, while an \emph{additive covering} ($\oplus$-covering, for short) is a finite family $\{\alpha_i\}_{i=1}^n$ of elements of $[0,1]^X$, $n < \omega$, such that $\alpha_1 \oplus \cdots \oplus \alpha_n = \1$. It is worthwhile remarking that we used the word ``family'', instead of ``set'', in order to include the possibility for such a family to have repetitions. In other words, an additive covering is a finite subset $\{\alpha_1, \ldots, \alpha_k\}$ of $[0,1]^X$, along with natural numbers $n_1, \ldots, n_k$, such that $n_1\alpha_1 \oplus \cdots \oplus n_k \alpha_k = \1$.

\begin{definition}\label{compact}
An MV-topological space $\la X, \tau \ra$ is said to be \emph{compact} if any open covering of $X$ contains an additive covering; it is called \emph{strongly compact} if any open covering contains a finite covering.\footnote{What we call strong compactness here is called fuzzy compactness in the theory of lattice-valued fuzzy topologies \cite{chal}. It is worth remarking, however, that such a concept appears a very few times in the literature, since it is too much restrictive. Indeed, for example, even a fuzzy topological space with finite support may not be strongly compact.}
\end{definition}

\begin{definition}\cite{martin}\label{t2ax}
Let $\la X, \tau\ra$ be an MV-topological space. $X$ is called a \emph{Hausdorff} (or \emph{separated}) \emph{space} if, for all $x \neq y \in X$, there exist $o_x, o_y \in \tau$ such that
\begin{enumerate}[(i)]
\item $o_x(x) = o_y(y) = 1$,
\item $o_x \wedge o_y = \0$.
\end{enumerate}
\end{definition}

\section{MV-Topologies amongst Fuzzy Topologies.}

In the present section we shall ``place'' MV-topologies within the realm of fuzzy topologies. It is well-known that there exist two main definitions of fuzzy topological space, one due to C. L. Chang and the other to Lowen. Each of them presents advantages and drawbacks, the main differences being probably the following two.
\begin{itemize}
\item Chang's definition includes classical topological spaces as a special case of fuzzy topological ones, in such a way that the category of topological spaces and continuous functions is a full subcategory of the one of fuzzy topologies and fuzzy continuous functions, while the same does not hold for Lowen's fuzzy topologies.
\item In Lowen's fuzzy topology, constant functions are always continuous, while this is not true for Chang's ones.
\end{itemize}

The first definition of fuzzy topology was the one given by Chang.
\begin{definition}\cite{chal}\label{fts}
  Let $\tau\subseteq[0,1]^X$. The pair $(X,\tau)$ is called a \emph{fuzzy topological space}, or \emph{fts} for short, if the following hold:
  \begin{enumerate}[($\tau_1$)]
    \item  $\0, \1\in \tau$;
    \item  $o_1 \wedge o_2 \in \tau$ whenever $o_1, o_2 \in \tau$; and
    \item  $\bigvee_{i \in I} o_i \in \tau$ for any family $\{o_i\}_{i \in I}$ of elements of $\tau$.
  \end{enumerate}
Every member of $\tau$ is called a $\tau$-\emph{open fuzzy set} (or simply open fuzzy set). The complement of a $\tau$-open fuzzy set is called a $\tau$-\emph{closed fuzzy set} (or simply closed fuzzy set).
\end{definition}

In 1976, Lowen gave a definition of fuzzy topological space which included Chang's conditions but, in addition, required all the constant functions to be open fuzzy sets.
\begin{definition}\cite{low}\label{lfts}
 The pair $(X,\tau)$ is called a \emph{laminated fuzzy topological space} if the following conditions hold:
  \begin{enumerate}[($\tau_1$)]
    \item For all $a\in [0,1]$, the $a$-valued constant function $\emph{\textbf{a}}$ is an element of $\tau$;
    \item $o_1 \wedge o_2 \in \tau$ whenever $o_1, o_2 \in \tau$; and
    \item $\bigvee_{i \in I} o_i \in \tau$ for any family $\{o_i\}_{i \in I}$ of elements of $\tau$.
  \end{enumerate}
\end{definition}

The concepts of continuous, open, and closed function, of homeomorphism, and of base are defined exactly as for MV-topologies. Fuzzy topological spaces, with continuous maps, form a category which we denote by $\Fuz$. The category of laminated fuzzy topological spaces and their continuous maps is denoted by $\LFuz$.

In \cite{low} Lowen defined the functors $\iota: \Fuz \lto \Top$ and $\omega: \Top\lto \LFuz$ as follows.

\begin{enumerate}
  \item  $\iota(X,\tau)=\la X,\iota(\tau)\ra$ where $\iota(\tau)$ is the initial topology on $X$ determined by $\tau$ and the lower limit topology on $[0,1]$, that is, $\iota(\tau)$ is the topology generated by the subbase $$B=\{\mu^{-1}[(r,1]]:\mu\in\tau,r\in[0,1)\}=\{\{x\in X:\mu(x)>r\}\}_{\mu\in\tau,r\in[0,1)}.$$
      It is easy to verify that if a map $f: \la X, \tau_X\ra \lto \la Y,\tau_Y\ra$ is continuous, then the map $f: \la X, \iota(\tau_X)\ra\lto \la Y,\iota(\tau_Y)\ra$ is continuous. Moreover, it is worth recalling that $\iota(\tau)$ is the coarsest topology which makes all the elements of $\tau$ lower semi-continuous w.r.t. the usual topology of $[0,1]$.
  \item  $\omega(X,\tau)=\la X,\omega(\tau)\ra$, with $$\omega(\tau)=\bigcup_{r\in[0,1)}\mathcal{C}(X,I_r)=\bigcup_{r\in[0,1)}\{f:X\lto I_r\mid f \text{ is continuous}\},$$ where $I_r=(r,1]$. Note that $\omega(\tau)$ is the set of all lower semicontinuous functions from $\la X,\tau\ra$ to the interval $[0,1]$ equipped with the usual topology. It can be verified that the continuity of a map $f: \la X, \tau_X\ra\lto \la Y,\tau_Y\ra$ implies the continuity of the map $f: \la X, \omega(\tau_X)\ra\lto \la Y,\omega(\tau_Y)\ra$. Thus, $\omega$ is a functor.
\end{enumerate}
A fuzzy space $\la X, \delta\ra$ whose fuzzy topology is of the form $\delta = \omega(\tau)$ for an ordinary topology $\tau$ on $X$ is called \emph{topologically generated} (or \emph{induced}) \cite{low}.

It is possible to define two further functors, $e: \Top\lto \Fuz$ and $j: \Fuz\lto \Top$ \cite{martin}, which will appear in the next sections, in the following way.
\begin{itemize}
\item [3.] $e(X,\tau)=\la X,e(\tau)\ra$, where $$e(\tau)=\{\chi_U:U\in\tau\}.$$ Since the continuity of a map $f: \la X, \tau_X\ra\lto \la Y,\tau_Y\ra$ guarantees the continuity of the map $f: \la X, e(\tau_X)\ra\lto \la Y,e(\tau_Y)\ra$, $e$ is a functor.
\item [4.] $j(X,\tau)=\la X,j(\tau)\ra$, where $$j(\tau)=\tau\cap2^X.$$ Again, the continuity of a map $f: \la X, \tau_X\ra\lto \la Y,\tau_Y\ra$ guarantees the continuity of $f: \la X, j(\tau_X)\ra\lto \la Y,j(\tau_Y)\ra$, thus $j$ is a functor.
\end{itemize}
Note that $j(\tau)$ is the greatest topology contained in $\tau$, and $j$ is exactly the ``skeleton topology functor'' denoted by $\sk$ in \cite{rus2}.

\begin{definition}\cite{martin}
\label{weakly induced}
A fuzzy space $\la X,\tau\ra$ is said to be \emph{weakly induced} if for each $t\in [0,1)$ and for each $\alpha\in \tau$, the characteristic function of $\{x\in X:\alpha(x)>t\}$ belongs to $\tau$.
\end{definition}
Note that a fuzzy space $\la X,\tau\ra$ is weakly induced if and only if $\iota(\tau)=j(\tau)$, and it is topologically generated if and only if is both laminated and weakly induced.

In the following, we recall some useful properties of these functors (see \cite{low} and \cite{cerr}).

\begin{proposition}\label{properties of functors iota, omega, etc}
  \begin{enumerate}[(i)]
    \item For all $\la X,\tau\ra$ in $\Top$, $\iota(\omega(\tau))=\tau.$
    \item $\iota$ is a surjection, $\omega$ is an injection and for each $\tau_1\subseteq \tau_2$, $\iota(\tau_1)\subseteq\iota(\tau_2)$ and $\omega(\tau_1)\subseteq\omega(\tau_2)$ (where $\tau_1,\tau_2$ are topologies or fuzzy topologies, as appropriate).
    \item $\omega(\iota(\tau))$ is the smallest topologically generated fuzzy topology which contains $\tau$.
    \item $\delta$ is topologically generated iff $\omega(\iota(\delta))=\delta$.
    \item $\iota\omega=\iota e=\id_{Top}$.
    \item $\la X,\tau\ra\in e(\Top)$ iff $e(\iota(\tau))=\tau$ iff $\id_X\in \hom_{\Fuz}(e(\iota(X)),X)$.
    \item $\la X,\tau\ra\in \omega(\Top)$ iff $\omega(\iota(\tau))=\tau$ iff $\id_X\in \hom_{\Fuz}(X,\omega(\iota(X)))$.
    \item For all $\la X,\tau\ra$ in $\Fuz$, $\id_X\in \hom_{\Fuz}(\omega(\iota(X)),X)$.
    \item For all $X\in\Fuz$ and for all $Y\in\Top$, $\hom_{\Top}(\iota(X),Y)=\hom_{\Fuz}(X,e(Y))$.
  \end{enumerate}
\end{proposition}

\begin{theorem}\cite{low}
A fuzzy topological space $\la X,\tau\ra$ is topologically generated if and only if for each continuous function $f\in \mathcal{C}(I_r,I_r)$ and for each $\alpha \in \tau$, we have that $f\circ\alpha\in\tau$.
\end{theorem}

\begin{proposition}\label{MV-space is weak induced}
  Each MV-topological space is a weakly induced fuzzy topological space.
\end{proposition}
\begin{proof}
 Let $\la X,\tau\ra$ be an MV-topological space. In the following, we will identify a subset of $A$ with its characteristic map, so we will make no difference between $\chi_A$ and $A$ when $A\subseteq X$. We have to show that for each $t\in [0,1)$ and each $\alpha\in \tau$, the set $\{x\in X:\alpha(x)>t\}$ is an element of $\tau$. We will proceed in three steps.

\begin{claim}
For $\alpha\in\tau$, we have that $\supp\alpha\in\tau.$
\end{claim}
Indeed, as $\supp\alpha=\{x\in X:\alpha(x)>0\}$, then for each $x\in \supp\alpha$, there is some natural number $n$ such that
 $$n\alpha(x)=\underbrace{\alpha(x)\oplus \cdots \oplus \alpha(x)}_{n \text{ times}}=1$$
 then
 $$\chi_{\supp\alpha}=\bigvee_{n=1}^\infty n\alpha\in\tau.$$

\begin{claim}
  For each $\alpha\in \tau$ and every irreducible fraction $t=\frac{k}{2^n}\in (0,1)$,
  $$\alpha_t=\{x\in X:\alpha(x)>t\}\in\tau.$$
\end{claim}
By induction,
\begin{itemize}
  \item For $n=1$ and $k=1$. For each $x\in X$
  $$\alpha(x)>\frac{1}{2} \text{ iff } \alpha(x)\odot\alpha(x)>0.$$
  Hence, $\alpha_{1/2}=\supp(\alpha\odot\alpha)\in\tau.$
  \item Inductive step. Let see that if $\alpha_t\in\tau$ for all $t$ of the form $\frac{k}{2^n}$, then $\alpha_t\in\tau$ for all $t$ of the form $\frac{k}{2^{n+1}}.$

      If $t<1/2$, then $t\oplus t=\frac{k}{2^n},$ hence $\alpha_t=(\alpha\oplus\alpha)_{t\oplus t}\in\tau.$

      If $t\geq1/2$, then $k\geq2^n$ and $t\odot t=\frac{k-2^n}{2^n}$, hence $\alpha_t=(\alpha\odot\alpha)_{t\odot t}\in\tau.$
  \end{itemize}

\begin{claim}\label{claim3}
  For all $\mu \in \tau$ and $t\in [0,1)$,
  $$\mu_t=\{x\in X:\mu(x)>t\}=\bigcup\{\mu_s:s=\frac{k}{2^n},s\geq t\}\in\tau.$$
  \end{claim}
\end{proof}

We will say that an MV-space $\la X,\tau\ra$ is \emph{laminated} if all constant functions on $X$ are elements of $\tau$. It is clear that such spaces form a full subcategory of $\TMV$, which will be denoted by $\LTMV$.

%\subsection{The functors $\omega$, $\iota$, $e$ and $j$ and their relation with $\TMV$}
We will now see some properties about the functors defined in this section, when they are restricted to $\TMV$ both in the domain and in the codomain.

\begin{proposition}\label{functors omega y iota en MVtop prope}
%propiedad 2.4 de cerruti
The functors $\omega$, $\iota$, $e$ and $j$ have the following properties with respect to the category $\TMV$.
  \begin{enumerate}[(i)]
    \item If $\la X,\tau\ra$ is a topological space then $\omega(\tau)$ is an MV-topology, so the codomain of the functor $\omega$ is actually the $\LTMV$ category.
    \item For all $\la X,\tau\ra$ in $\TMV$ and for all $\la Y,\delta\ra$ in $\Top$, $$\hom_{\Top}(Y,\iota(X))=\hom_{\TMV}(\omega(Y),X).$$ This implies that $\omega$ is a left adjoint of $\iota\restr_{\TMV}: \TMV\lto \Top$.
    \item The functor $e$ can be seen as $e:\Top\lto \TMV$.
    \item For all $\la X,\tau\ra$ in $\TMV$ and for all $\la Y,\delta\ra$ in $\Top$, $$\hom_{\Top}(\iota(X),Y)=\hom_{\TMV}(X,e(Y)).$$ This says that $e$ is a right adjoint of $\iota\restr_{\TMV}: \TMV\lto \Top$.
    \item For all $\la X,\tau_X\ra$ in $\TMV$ and for all $\la Y,\tau_Y\ra$ in $\LTMV$,
    $$\hom_{\TMV}(X,Y)\neq\varnothing \Leftrightarrow \la X,\tau_X\ra\in \LTMV.$$
    \item $\LTMV \bigcap \TMV =\omega(\Top).$
  \item For all $\la X,\tau\ra$ in $\TMV$, $\la X, \iota(\tau) \ra = \la X, j(\tau) \ra$.
  	\end{enumerate}
\end{proposition}
\begin{proof}
  \begin{enumerate}[(i)]
    \item We recall that $\omega(\tau)$ is the following fuzzy topology
    $$\omega(\tau)=\bigcup_{r\in[0,1)}\mathcal{C}(X,I_r)=\bigcup_{r\in[0,1)}\{f:X\lto I_r: f \text{ is continuous}\}$$ where $I_r=(r,1]$.
    Let us see that $\omega(\tau)$ is closed for $\oplus$ and $\odot$. If $f:X\lto I_r$ and $g:X\lto I_s$ are elements of $\omega(\tau)$, then
    $f\oplus g:X\lto I_{\min(r,s)}$ given by
    $$(f\oplus g)(x)=f(x)\oplus g(x)=\min(f(x)+g(x),1)$$ is continuous. Analogously,
    $f\odot g:X\lto [0,1]$ given by
    $$(f\odot g)(x)=f(x)\odot g(x)=\max(f(x)+g(x)-1,0)$$ is continuous.
    \item The sentence holds because a function $f:\la Y,\delta\ra\lto \la X,\iota(\tau)\ra$ is continuous in $\Top$ if and only if $f:\la Y,\omega(\delta)\ra\lto\la X,\tau\ra$ is continuous in $\TMV$. Let us see, we have that $f$ is continuous in $\Top$ if for all $t\in[0,1)$ and $\mu\in\tau$, $U=\{x\in X:\mu(x)>t\}\in\iota(\tau)$ implies $$f^{-1}(U)=\{y\in Y:f(y)\in U\}=\{y\in Y:\mu(f(y))>t\}\in\delta.$$ And this is equivalent to say that $\mu\circ f:Y\lto I_t$ is continuous, i. e., $\mu\circ f\in\omega(\delta)$, that is, $f$ is continuous in $\TMV$.
    \item It is clear because $e(\tau)=\{\chi_U:U\in\tau\}$ is an MV-topology whenever $\tau$ is a topology.
    \item It is enough to observe that if $\alpha\in e(\delta)$ then $\alpha=\chi_U$ for some $U\in\delta$. So, $f$ is continuous in $\TMV$ if for all $U\in\delta$, $\chi_U\circ f\in\tau$, that is, $\chi_{f^{-1}(U)}\in\tau$, and it is equivalent to say that $f^{-1}(U)\in\iota(\tau)$, and so $f$ is continuous in $\Top$.
  \item Trivial.
		\item Follows immediately from Proposition \ref{MV-space is weak induced} and \cite[Theorem 2.4]{martin}.
		\item Follows from Proposition \ref{MV-space is weak induced} and the remark right after Definition \ref{weakly induced}.
  \end{enumerate}
\end{proof}

\section{Subbases. Product topologies.}
\label{subprod}

In this section we set the basics for the development of the main topic. We shall define subbases and large subbases for MV-topological spaces, and we will describe the product of MV-topological spaces in complete analogy to the classical case.

\begin{definition}\label{subbase}
Given an MV-topological space $\langle X, \tau\rangle$, a subset $S$ of $\tau$ is called a \emph{subbase} for $\tau$ if each open set of $X$ can be obtained as a join of finite combinations of products, infima, and sums of elements of $S$. More precisely, $S$ is a subbase for $\tau$ if, for all $\alpha \in \tau$, there exists a family $\{t_i\}_{i \in I}$ of terms (or polynomials) in the language $\{\oplus, \odot, \wedge\}$, such that
\begin{equation}\label{subcomb}
\alpha = \bigvee_{i \in I} t_i(\beta_{i1}, \ldots, \beta_{in_i})
\end{equation}
where, for all $i \in I$, $n_i < \omega$, and $\{\beta_{ij}\}_{j=1}^{n_i} \subseteq S$.
\end{definition}

\begin{remark}\label{base-subbase}
If $S$ is a subbase for an MV-topology, the set $B_S$ defined by the following conditions is obviously a base for the same space:
\begin{enumerate}
  \item [(B1)] $S \subseteq B_S$;
  \item [(B2)] if $\alpha, \beta \in B_S$ then $\alpha \star \beta \in B_S$ for $\star \in \{\oplus, \odot, \wedge\}$.
\end{enumerate}
\end{remark}

A subbase $S$ of an MV-topology $\tau$ shall be called \emph{large} if, for all $\alpha \in S$, $n\alpha \in S$ for all $n < \omega$.

\begin{exm}
Let us consider the topology $[0,1]$ on a singleton $\{x\}$. For any $n > 1$, all the sets of type $[0,1/n]$, $[0,1/n] \cap \Q$, and $[0,1/n] \setminus \Q$ are easily seen to be (non-large) subbases for the given topology. Also $[0,1] \setminus \Q$ is a non-large subbase -- a base, in fact.
\end{exm}

\begin{exm}
Let $X$ be a non-empty set and $d: X \lto [0,+\infty[$ be a distance function on $X$. We recall from \cite{low} and \cite{rus2} that, for any fuzzy point $\alpha$ of $X$, with support $x$, and any positive real number $r$, the \emph{open ball} of center $\alpha$ and radius $r$ is the fuzzy set $\beta_r(\alpha)$ defined by the membership function $\beta_r(\alpha)(y) = \left\{\begin{array}{ll} \alpha(x) & \textrm{if } d(x,y) < r \\ 0 & \textrm{if } d(x,y) \geq r \end{array}\right.$, and that the fuzzy subsets of $X$ that are join of a family of open balls form an MV-topology on $X$ that is said to be \emph{induced} by $d$.

It is easy to see that, in such a topology, the set of open balls whose center is a fuzzy point whose non-zero membership value is greater than or equal to some fixed $a < 1$ is a large subbase for the topology induced by $d$. On the contrary, the set of open balls whose center is a fuzzy point whose non-zero membership value is lower than or equal to some fixed $a > 0$ is a non-large subbase.
\end{exm}

\begin{definition}\label{prod}
Let $\{\langle X_i, \tau_i \rangle\}_{i\in I}$ be a family of MV-topological spaces. According to the general definition of Category Theory, we say that an MV-topological space $\la X, \tau\ra$, with a family $(p_i: X \to X_i)_{i \in I}$ of continuous functions, is the \emph{product} of the spaces $\{\langle X_i, \tau_i \rangle\}_{i \in I}$ if, for any MV-topological space $\la Y, \sigma\ra$ and any family of continuous functions $(f_i: Y \to X_i)_{i \in I}$, there exists a unique continuous function $f: Y \to X$ such that $p_i \circ f = f_i$ for all $i \in I$.
\end{definition}

Let $\{\langle X_i, \tau_i \rangle\}_{i\in I}$ be a family of MV-topological spaces. We define the \emph{product MV-topology} $\tau_X$ on the Cartesian product $X=\prod \limits_{i\in I}X_i$ by means of the subbase
\begin{equation}\label{S}
S = \{\pi_i\fcou(\alpha) \mid \alpha \in \tau_i, i \in I\},
\end{equation}
where $\pi_i: X \to X_i$ is the canonical projection. The name ``product MV-topology'' is fully justified by the following result.

\begin{theorem}
The MV-topological space $\langle X, \tau_X\rangle$, with the canonical projections $\pi_i$, is the product of $\{\langle X_i, \tau_i\rangle\}_{i\in I}$.
\end{theorem}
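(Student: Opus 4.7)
The plan is a direct verification of the universal property. I first note that the canonical projections are continuous essentially by construction: for every $\alpha \in \O_i$ we have $\pi_i\fcou(\alpha) \in S \subseteq \O_X$, so $\pi_i\fcou[\O_i] \subseteq \O_X$, and Lemma~\ref{bascont} (applied with $\O_i$ as a base for itself) yields the continuity of $\pi_i$.

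Now, given an MV-topological space $\sigma = \la Y, \Xi \ra$ with a family of continuous maps $f_i : Y \to X_i$, the requirement $\pi_i \circ f = f_i$ forces the set-theoretic definition $f(y) = (f_i(y))_{i \in I}$. Hence the mediating map, if continuous, is necessarily unique and automatically satisfies the commutation equations, so the only nontrivial task is to establish continuity of $f$.

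By Remark~\ref{base-subbase}, the subbase $S$ generates a base $B_S$ for $\O_X$ by closure under $\oplus$, $\odot$, $\wedge$; by Lemma~\ref{bascont} it is therefore enough to verify $f\fcou[B_S] \subseteq \Xi$. For a subbasic element $\pi_i\fcou(\alpha) \in S$, a pointwise computation gives
\[
f\fcou(\pi_i\fcou(\alpha))(y) = \alpha(\pi_i(f(y))) = \alpha(f_i(y)) = f_i\fcou(\alpha)(y),
\]
so $f\fcou(\pi_i\fcou(\alpha)) = f_i\fcou(\alpha) \in \Xi$ by continuity of $f_i$. Since the MV-preimage $f\fcou$ is an MV-algebra homomorphism, it commutes with $\oplus$, $\odot$ and $\wedge$, so the inclusion $f\fcou[S] \subseteq \Xi$ propagates to $f\fcou[B_S] \subseteq \Xi$ by an immediate induction on the construction of $B_S$.

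I do not expect a genuine obstacle here: the argument is a textbook universal-property verification, and it goes through because the operations generating $B_S$ from $S$ are precisely the ones preserved by $f\fcou$. The only delicate point is to invoke Lemma~\ref{bascont} at the level of the base $B_S$ rather than the subbase $S$, which is exactly what Remark~\ref{base-subbase} legitimises.
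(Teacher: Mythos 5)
Your proposal is correct and follows essentially the same route as the paper's proof: continuity of the mediating map $f(y)=(f_i(y))_{i\in I}$ is checked on the subbasic elements $\pi_i\fcou(\alpha)$ via the pointwise computation $f\fcou(\pi_i\fcou(\alpha))=f_i\fcou(\alpha)$, and then propagated to the base $B_S$ of Remark~\ref{base-subbase} by induction using compatibility of $f\fcou$ with $\oplus,\odot,\wedge$, with Lemma~\ref{bascont} closing the argument and uniqueness forced set-theoretically by $\pi_i\circ f=f_i$. No gaps; this matches the paper's argument in both structure and substance.
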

\begin{proof}
First, it is immediate to see that all projections $\pi_i$ are continuous.

Now let $Y$ be an MV-topological space and $(f_i:Y \to X_i)_{i\in I}$ a family of continuous functions. We set $f: y  \in Y \mapsto (f_i (y))_{i\in I} \in X$. Let us show that $f$ is continuous. Let $B$ be the base obtained from $S$ as in Remark \ref{base-subbase} and consider an open set $\beta \in \tau_X$. If $\beta \in S$, namely, $\beta = \alpha \circ \pi_i$ for some $\alpha \in \tau_i$ then, for all $y\in Y$,
$$(\beta \circ f)(y) = ((\alpha \circ \pi_i)\circ f)(y) = (\alpha \circ \pi_i)(f_i(y))_{i\in I} = \alpha(f_i(y)) = (\alpha \circ f_i)(y)$$
and therefore $f\fcou(\beta) = \beta \circ f = \alpha \circ f_i \in \tau_Y$ because each $f_i$ is continuous. Now let us assume that $\beta = \alpha \star \gamma$, with $\star \in \{\oplus, \odot, \wedge\}$ and $\alpha, \gamma \in B$ being such that $\alpha \circ f, \gamma \circ f \in \tau_Y$. Then we have that $\beta \circ f = (\alpha \star \gamma) \circ f = (\alpha \circ f) \star (\gamma \circ f) \in \tau_Y$. Then $f$ is continuous.

Now, in order to prove that $f$ is the universal extension of $(f_i)_{i\in I}$, let $g:Y \to X$ be a continuous function such that $\pi_i \circ g=f_i$ for each $i\in I$. For all $y \in Y$, $g(y) = (\pi_i(g(y)))_{i \in I} = (f_i(y))_{i \in I}$, and therefore $g=f$.
\end{proof}

\section{Tychonoff-type theorem for MV-topologies}
\label{tycho}

In the present section we shall prove the MV-topological correspondents of Alexander Subbase Lemma (Lemma \ref{alex}) and Tychonoff Theorem (Theorem \ref{tych}). As in the classical case, the latter turns out to be an immediate consequence of the former. Moreover, we observe that Theorem \ref{tych} can be obtained also as an immediate consequence of Lowen's Tychonoff theorem for fuzzy topologies and Theorem \ref{ultra}. Nonetheless, besides the fact that the two proofs have been obtained separately and in different moments (hereby presented in chronological order), we believe that the analogous of Alexander Subbase Lemma for MV-topologies is interesting and potentially useful also for future works, therefore we thought it would be the best option to present both the approaches.

\begin{lemma}\label{lem1}
Let $\{\langle X_i, \tau_i\rangle\}_{i\in I}$ be a family of compact MV-topological spaces and let $\langle X, \tau_X\rangle$ be their product. Then any open cover $\Gamma$ of $X$ consisting solely of elements of the form $\alpha \circ \pi_i$, $\alpha \in \tau_i$, contains an additive cover.
\end{lemma}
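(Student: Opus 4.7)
The plan is to reduce matters to the compactness of a single factor. For each $i \in I$ set $\Gamma_i := \{\alpha \in \O_i \mid \pi_i\fcou(\alpha) \in \Gamma\}$ and $g_i := \bigvee \Gamma_i \in [0,1]^{X_i}$. Since every element of $\Gamma$ has the form $\pi_i\fcou(\alpha)$ with $\alpha \in \Gamma_i$, the hypothesis $\bigvee \Gamma = \1$ translates pointwise into the condition $\sup_{i \in I} g_i(x_i) = 1$ for every $x = (x_i)_{i \in I} \in X$. Setting $D_i := \inf_{y \in X_i} g_i(y)$, the first step is to establish that $\sup_{i \in I} D_i = 1$. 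Given $\epsilon > 0$, invoke the Axiom of Choice to pick, for each $i$, a point $y_i^\epsilon \in X_i$ with $g_i(y_i^\epsilon) < D_i + \epsilon$; plugging $y^\epsilon := (y_i^\epsilon)_{i \in I}$ into the covering condition yields $1 \leq \sup_i D_i + \epsilon$, and the claim follows as $\epsilon \to 0$.

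Next I fix some $i^* \in I$ with $D_{i^*} > 1/2$. For every $y \in X_{i^*}$ one has $g_{i^*}(y) > 1/2$, so there exists $\alpha \in \Gamma_{i^*}$ with $\alpha(y) > 1/2$, whence $(\alpha \oplus \alpha)(y) = \min(2\alpha(y), 1) = 1$. Consequently the family $\Gamma_{i^*}^{(2)} := \{\alpha \oplus \alpha \mid \alpha \in \Gamma_{i^*}\}$, which lies in $\O_{i^*}$ by closure of the MV-topology under $\oplus$, satisfies $\bigvee \Gamma_{i^*}^{(2)} = \1$ and is therefore an open covering of $X_{i^*}$. By compactness of $X_{i^*}$ this covering contains a finite additive subcovering $\{\alpha_1 \oplus \alpha_1, \ldots, \alpha_k \oplus \alpha_k\}$; unpacking the identity $(\alpha_1 \oplus \alpha_1) \oplus \cdots \oplus (\alpha_k \oplus \alpha_k) = \1$ by associativity shows that the finite family consisting of each $\alpha_l$ taken with multiplicity $2$ is an additive covering of $X_{i^*}$ lying in $\Gamma_{i^*}$.

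Since $\pi_{i^*}\fcou$ is an MV-homomorphism and so preserves $\oplus$, pulling this family back through $\pi_{i^*}\fcou$ produces the desired finite additive subcovering of $\Gamma$. The main obstacle is the opening step: in an infinite product the covering hypothesis only forces the values $g_i(x_i)$ to have joint supremum $1$, and no single $\Gamma_i$ need satisfy $\bigvee \Gamma_i = \1$ exactly, which would be necessary to apply compactness to $\Gamma_{i^*}$ directly. The doubling device $\alpha \mapsto \alpha \oplus \alpha$ is precisely what converts the uniform lower bound $g_{i^*} > 1/2$ coming from $\sup_i D_i = 1$ into a genuine MV-cover to which MV-compactness of the factor $X_{i^*}$ applies.
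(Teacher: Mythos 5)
Your proof is correct and follows essentially the same route as the paper's: split $\Gamma$ into the families $\Gamma_i$, use a choice of points in the factors (evaluated against the covering condition) to isolate a single index at which $\Gamma_i$ behaves like a cover, scale its members by a natural multiple (using closure of $\O_i$ under $\oplus$) to get a genuine open cover of that factor, apply compactness there, and pull the resulting additive cover back along $\pi_i\fcou$. The only difference is quantitative: the paper just finds an index $j$ at which every $x\in X_j$ admits some $\alpha_x\in\Gamma_j$ with $\alpha_x(x)>0$ and then uses point-dependent multiples $n_x\alpha_x$, whereas you establish the stronger uniform bound $\sup_{i}\inf_{y}g_i(y)=1$ so that a single doubling $\alpha\mapsto\alpha\oplus\alpha$ suffices.
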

\begin{proof}
Let $\Gamma$ be such a cover of $X$, and define
$$\Gamma_i=\{\alpha \in \tau_i: \alpha\circ \pi_i \in \Gamma\}.$$

We claim that
\begin{equation}\label{cl1}
\exists j\in I \ \forall x\in X_j \ \exists\alpha_x \in \Gamma_j \ (\alpha_x(x) > 0).
\end{equation}
Indeed, assuming by contradiction that (\ref{cl1}) does not hold, namely, that for each index $i\in I$ there exists $a_i\in X_i$ such that $\alpha(a_i)=0$ for all $\alpha \in \Gamma_i$, then obviously $\left(\bigvee\Gamma_i\right)(a_i)=0$ for all $i\in I$. Therefore, setting $a=(a_i)_{i\in I} \in X$, we get
$$\begin{array}{l}\left(\bigvee\Gamma\right)(a) = \\ = \left(\bigvee_{i\in I}\left(\bigvee_{\alpha \in \Gamma_i} (\alpha \circ \pi_i)\right)\right)(a) = \\ = \left(\bigvee_{i\in I}(\left(\bigvee \Gamma_i\right) \circ \pi_i)\right)(a)  = \bigvee_{i\in I}\left(\bigvee \Gamma_i (a_i)\right) = \\ = 0,\end{array}$$
which implies that $\Gamma$ does not cover $X$, in contradiction with the hypothesis. Hence the statement (\ref{cl1}) holds.

Now, from (\ref{cl1}) it follows that, for all $x \in X_j$, there exists $n_x < \omega$ such that $n_x\alpha_x(x)=1$. Then the family $(n_x\alpha_x)_{x\in X_j}$ is an open cover of $X_j$ and, by the compactness of $X_j$, there exist $x_1,\ldots,x_m \in X_j$ such that
$$\bigoplus_{k=1}^m n_{x_k}\alpha_{x_k} = \1.$$
It follows that
$$\bigoplus_{k=1}^m (n_{x_k}(\alpha_{x_k}\circ\pi_j)) = \bigoplus_{k=1}^m ((n_{x_k}\alpha_{x_k})\circ\pi_j) = \bigoplus_{k=1}^m n_{x_k}\alpha_{x_k} = \1,$$
whence we obtain an additive subcover of $\Gamma$ by simply taking $n_{x_k}$ copies of each $\alpha_{x_k}\circ \pi_j$, $k = 1, \ldots, m$.
\end{proof}

Before proving the MV-topological analogous of Alexander Subbase Lemma, we recall that the following inequality holds in any MV-algebra $A$, for all $a, b ,c \in A$ \cite[Theorem 3.1]{cha1}:
\begin{equation}\label{prop}
a\odot(b\oplus c)\leq b\oplus (a\odot c).
\end{equation}

\begin{lemma}\label{alex}
  Let $\langle X, \tau \rangle$ be an MV-topological space and $S$ a large subbase for $\tau$. If every collection of sets from $S$ that cover $X$ has an additive subcover, then $X$ is compact.
\end{lemma}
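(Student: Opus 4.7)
The plan is to adapt the classical Alexander subbase proof via Zorn's lemma. Suppose for contradiction $\tau$ is not compact, and let $\mathcal{P}$ be the family of open covers of $X$ without additive subcover, ordered by inclusion; $\mathcal{P}$ is non-empty. Any chain in $\mathcal{P}$ has its union as an upper bound in $\mathcal{P}$, since an additive subcover is finite and hence already contained in some member of the chain. Zorn then yields a maximal element $\mathcal{C} \in \mathcal{P}$, and the goal is to contradict $\mathcal{C} \in \mathcal{P}$ by showing $\mathcal{C} \cap S$ is a subbase cover of $X$; the hypothesis will then supply an additive subcover of $\mathcal{C} \cap S \subseteq \mathcal{C}$, contradicting $\mathcal{C} \in \mathcal{P}$.

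Maximality forces two structural properties on $\mathcal{C}$. First, for each $\alpha \in \O \setminus \mathcal{C}$ the family $\mathcal{C} \cup \{\alpha\}$ has an additive subcover necessarily involving $\alpha$: there exist $n \geq 1$ and $\beta_1, \ldots, \beta_k \in \mathcal{C}$ with
\[
n\alpha \oplus \beta_1 \oplus \cdots \oplus \beta_k = \1.
\]
Second, $\mathcal{C}$ is downward closed in $\O$: if $\alpha \leq \gamma \in \mathcal{C}$ but $\alpha \notin \mathcal{C}$, the previous identity together with $n\alpha \leq n\gamma$ yields $n\gamma \oplus \bigoplus_j \beta_j = \1$, an additive subcover of $\mathcal{C}$, contradicting $\mathcal{C} \in \mathcal{P}$. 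From this one easily deduces that $\mathcal{C}$ is also closed under finite $\oplus$-sums and $\N$-multiples of its own members; combined with Remark~\ref{base-subbase}, it follows at once that $\mathcal{C} \cap B_S$ already covers $X$, since each $\gamma \in \mathcal{C}$ is a join of terms in $B_S$ dominated by $\gamma$, each of which therefore lies in $\mathcal{C}$.

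The crux is to descend from $\mathcal{C} \cap B_S$ to $\mathcal{C} \cap S$. Fix $x_0 \in X$ and $\varepsilon > 0$; choose $t \in \mathcal{C} \cap B_S$ with $t(x_0) > 1 - \varepsilon$, written $t = t(s_1, \ldots, s_n)$ with $s_i \in S$. Because $\oplus, \odot, \wedge$ are each pointwise bounded above by the truncated sum, one has $t \leq \bigoplus_i m_i s_i$ where $m_i$ is the multiplicity of $s_i$ in $t$; hence some leaf $s_j$ satisfies $s_j(x_0) > (1-\varepsilon)/M$ with $M = \sum_i m_i$, and by largeness $M s_j \in S$ with $(M s_j)(x_0) \geq 1 - \varepsilon$. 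I plan to place some such subbase element in $\mathcal{C}$ by a structural induction on $t$: the $\oplus$-case is immediate from downward closure (both operands of $\oplus$ lie below $t \in \mathcal{C}$), and the $\wedge$-case rests on the auxiliary lemma that $\alpha_1 \wedge \alpha_2 \in \mathcal{C}$ implies $\alpha_1 \in \mathcal{C}$ or $\alpha_2 \in \mathcal{C}$, itself a consequence of the MV-identity $(p \oplus q) \wedge (u \oplus v) \leq (p \wedge u) \oplus (q \vee v)$ applied to two maximality relations, combined with $n_1\alpha_1 \wedge n_2\alpha_2 \leq \max(n_1, n_2)(\alpha_1 \wedge \alpha_2)$.

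The $\odot$-case is the principal obstacle. Unlike $\oplus$ and $\wedge$, no multiple of $\alpha_1 \odot \alpha_2$ pointwise dominates $n_1\alpha_1 \oplus n_2\alpha_2$, so the clean auxiliary lemma is not available. The remedy is the one-sided distributivity encoded in the inequality (\ref{prop}): iterating $a \odot (b \oplus c) \leq b \oplus (a \odot c)$ on the product $(n_1\alpha_1 \oplus B_1) \odot (n_2\alpha_2 \oplus B_2) = \1$, and using that $\mathcal{C}$ is closed under $\odot$ of its members (via downward closure, since $\alpha \odot \beta \leq \beta$), one extracts an additive combination of $\mathcal{C}$-elements equal to $\1$, giving the contradiction and completing the descent. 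Together with the previous steps, this shows $\bigvee(\mathcal{C} \cap S) = \1$, and the standing hypothesis yields the promised additive subcover of $\mathcal{C}$, contradicting $\mathcal{C} \in \mathcal{P}$ and concluding that $X$ is compact.
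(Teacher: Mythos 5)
Your overall architecture is the same as the paper's: Zorn's lemma yields a maximal open cover $\mathcal{C}$ without additive subcover, you show $\mathcal{C}$ is downward closed and closed under finite sums and multiples, and you descend from $\mathcal{C}\cap B_S$ to $\mathcal{C}\cap S$ by structural induction on subbase terms, using largeness to multiply up and the hypothesis on $S$ to reach a contradiction. Your $\oplus$- and $\wedge$-cases are fine (your $\wedge$-inequality does hold and gives $\max(n_1,n_2)(\alpha_1\wedge\alpha_2)\oplus B_1\oplus B_2=\1$). The gap is exactly where you locate it: the $\odot$-case, and the remedy you sketch does not close it. Iterating (\ref{prop}) on $(n_1\alpha_1\oplus B_1)\odot(n_2\alpha_2\oplus B_2)=\1$ yields bounds of the shape $(n_1\alpha_1\odot n_2\alpha_2)\oplus B_1\oplus B_2=\1$, and $n_1\alpha_1\odot n_2\alpha_2$ is neither below a multiple of $\alpha_1\odot\alpha_2$ nor below anything you know to lie in $\mathcal{C}$ (take $\alpha_1=\alpha_2$ constantly $1/2$ and $n_1=n_2=2$: that term is $\1$ while $\alpha_1\odot\alpha_2=\0$ and $B_1,B_2$ may vanish there); so no additive combination of $\mathcal{C}$-elements equal to $\1$ is extracted. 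Note also that your sketch never actually uses $\alpha_1\odot\alpha_2\in\mathcal{C}$: if $\1$ could be assembled from $\mathcal{C}$-elements using only the two maximality relations, $\mathcal{C}$ would acquire an additive subcover merely because two opens lie outside it, which is absurd.

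Worse, the splitting statement your induction needs at a $\odot$-node, namely that $\alpha_1\odot\alpha_2\in\mathcal{C}$ implies $\alpha_1\in\mathcal{C}$ or $\alpha_2\in\mathcal{C}$, is false for a maximal $\mathcal{C}$ in general: take $X=\N$ with $\O=[0,1]^{\N}$ (not compact, so $\mathcal{P}\neq\varnothing$) and $\alpha_1=\alpha_2$ the constant $1/2$; since $\alpha_i\oplus\alpha_i=\1$, no member of $\mathcal{P}$ contains $\alpha_i$, whereas maximality forces $\0=\alpha_1\odot\alpha_2\in\mathcal{C}$. Hence any correct treatment of the $\odot$-node must exploit the pointwise information carried through the induction (that the term is positive, indeed large, at the point $x_0$ in question), not membership alone; your sketch does not, and cannot be completed as written. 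For comparison, the paper handles this node (its Claim~\ref{ii)}) by first passing to a single-copy maximality relation $\alpha\oplus\beta_1\oplus\cdots\oplus\beta_k=\1$ (its Claim~\ref{i)}) and then applying (\ref{prop}); your honest multi-copy relation $n\alpha\oplus\beta_1\oplus\cdots\oplus\beta_k=\1$ is precisely what blocks that computation, and the same constant-$1/2$ example shows the reduction to a single copy can fail, so you cannot repair your argument simply by citing that claim either: this node is the genuinely delicate point of the lemma and your proposal leaves it open. (Your $\varepsilon$- and multiplicity-bookkeeping in the descent is also looser than necessary -- it suffices to track positivity at $x_0$ and multiply up, using largeness and closure of $\mathcal{C}$ under multiples -- but that part is repairable; the $\odot$-case, as sketched, is not.)
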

\begin{proof}
By contradiction, suppose that every cover of $X$ of elements of $S$ has an additive subcover, and $X$ is not compact. Then the collection
$$\mathfrak{F}=\{\Gamma\subseteq \tau \mid \bigvee \Gamma=\1 \text{ and $\Gamma$ does not contain additive covers}\}$$
is nonempty and partially ordered by set inclusion. We use Zorn's Lemma to prove that $\mathfrak{F}$ has a maximal element. Take any chain $\{E_i\}_{i \in I}$ in $\mathfrak{F}$; let us see that $E=\bigcup E_i$ is an upper bound of such a chain in $\mathfrak{F}$. It is clear that $E\subseteq \tau$ and $\bigvee E=\1$. To see that $E$ contains no additive subcover, look at any finite subcollection $\{f_1,\ldots,f_n\}$ in $E$. Then, for each $k$, there exists $i_k$ such that $f_k \in E_{i_k}$. Since we have a total ordering, there is some $E_{i_0}$ which contains all of the $f_k$'s. Thus such a finite collection cannot be an additive cover. Now, applying Zorn's Lemma, we can assert the existence of a maximal element $M$ in $\mathfrak{F}$.

First of all, let's see some properties of $M$.
\setcounter{claim}{0}
\begin{claim}\label{i)}
$\alpha\notin M$ iff $M \cup \{\alpha\}$ has an additive subcover.
\end{claim}
In other words $\alpha\notin M$ iff there exist $\beta_1,\ldots,\beta_n \in M$ such that $\alpha\oplus\beta_1\oplus\cdots \oplus \beta_n=\1$, and that is obvious.

\begin{claim}\label{ii)}
$\alpha_1, \ldots, \alpha_n \notin M$ implies $\alpha_1 \star \cdots \star \alpha_n \notin M$, for $\star \in \{\wedge, \oplus, \odot\}$.
\end{claim}
\emph{Proof of Claim \ref{ii)}.} First of all note that, for each $i \in \{1, \ldots, n\}$, there exists a finite family $\{\beta_{ij}\}_{j=1}^{m_i}$ of elements of $M$ such that
$$\alpha_i \oplus \bigoplus_{j=1}^{m_i}\beta_{ij}=\1,  \text{ and }  \alpha_i \oplus \bigoplus_{i=1}^n \bigoplus_{j=1}^{m_i}\beta_{ij} = \1.$$
Hence, if we set $\beta \bydef \bigoplus_{i=1}^n \bigoplus_{j=1}^{m_i}\beta_{ij}$, we have $\alpha_i\oplus \beta=\1$ for each $i\in\{1, \ldots, n\}$.

For $\star = \wedge$, for each $x\in X$, we have that $(\alpha_1 \wedge \cdots \wedge \alpha_n)(x)=\alpha_{j_x}(x)$ for some $j_x\in\{1,\ldots,n\}$. So, for each $x\in X$,
$$(\alpha_1 \wedge \cdots \wedge \alpha_n)(x)\oplus \beta(x)=\alpha_{j_x}(x)\oplus \beta(x)=1$$
namely, $\alpha_1 \wedge \cdots \wedge \alpha_n\oplus \beta =\1$, and then $\alpha_1 \wedge \cdots \wedge \alpha_n\notin M$.

Concerning $\odot$, using (\ref{prop}), we have that
$$\bigodot_{i=1} ^n\alpha_i \oplus \beta\geq\bigodot_{i=1}^{n-1}\alpha_i \odot (\alpha_n \oplus \beta)=\bigodot_{i=1} ^{n-1}\alpha_i\odot \1=\bigodot_{i=1} ^{n-1}\alpha_i$$
    then
$$ \bigodot_{i=1} ^n\alpha_i \oplus \beta\oplus \beta\geq\bigodot_{i=1} ^{n-1}\alpha_i \oplus \beta\geq \bigodot_{i=1} ^{n-2}\alpha_i$$
    whereby
    $$\bigodot_{i=1} ^n\alpha_i \oplus \underbrace{\beta\oplus \cdots \oplus \beta}_{n-1 \text{ times}}\geq \alpha_1$$
    and therefore
    $$\bigodot_{i=1} ^n\alpha_i \oplus \underbrace{\beta\oplus \cdots \oplus \beta}_{n \text{ times}}\geq \alpha_1\oplus \beta=\1.$$
It follows that
$(\alpha_1 \odot \cdots \odot \alpha_n)\oplus \underbrace{\beta\oplus \cdots \oplus \beta}_{n \text{ times}}=\1$ where $\beta\in M$ and then $\nobreak{\alpha_1 \odot \cdots \odot \alpha_n\notin M}$.

Last, for $\star = \oplus$, if $\alpha_1,\ldots,\alpha_n\notin M$ then, in particular, $\alpha_1\oplus\beta=\1$. It follows that $\alpha_1 \oplus \cdots \oplus \alpha_n \oplus\beta=\1$ and, therefore, $\alpha_1 \oplus \cdots \oplus \alpha_n \notin M$.

\begin{claim}\label{iii)}
If $\alpha \notin M$ and $\alpha \leq \beta$ then $\beta \notin M$.
\end{claim}
\emph{Proof of Claim \ref{iii)}.} Indeed, if $\alpha \notin M$ there exist $\beta_1,\ldots,\beta_n \in M$ such that $\alpha\oplus\beta_1\oplus\cdots \oplus \beta_n=\1$ and then $\1=\alpha\oplus\beta_1\oplus\cdots \oplus \beta_n\leq\beta\oplus\beta_1\oplus\cdots \oplus \beta_n$, so $\beta\notin M$.
    %$$M$ is an ideal of $[0,1]^X$

\begin{claim}\label{iv)}
$M$ is an ideal of the MV-algebra $[0,1]^X$.
\end{claim}
\emph{Proof of Claim \ref{iv)}.} $M$ is non-empty and, if $\alpha \in M$ and $\beta \leq \alpha$, then $\beta \in M$ by Claim \ref{iii)}. Moreover, if $\alpha, \beta\in M$ then $\alpha\oplus\beta\in M$ because, otherwise, if $\alpha\oplus\beta\notin M$ there exist $\beta_1,\ldots,\beta_n \in M$ such that $\alpha\oplus\beta\oplus\beta_1\oplus\cdots \oplus \beta_n=\1$. But this is impossible because $M$ does not contain additive subcovers.

Observe that, as a consequence of Claims \ref{ii)} and \ref{iii)}, the set $F=\{\beta\in \tau:\beta\notin M\}$ is a filter of the MV-algebra $[0,1]^X$.

Let us now consider the set $T=M\cap S$, and let us prove that $T$ is a cover of $X$. Since $M$ is a covering of $X$, for each $a\in X$ there exists $\alpha_a \in M$ such that $\alpha_a(a)>0$. On the other hand, since $S$ is a subbase, there exists a family $\{t_i\}_{i \in I}$ of terms (or polynomials) in the language $\{\oplus, \odot, \wedge\}$, such that
\begin{equation}\label{alpha_a}
\alpha_a = \bigvee_{i \in I} t_i(\beta_{i1}, \ldots, \beta_{in_i})
\end{equation}
where, for all $i \in I$, $n_i < \omega$, and $\{\beta_{ij}\}_{j=1}^{n_i} \subseteq S$.

\begin{claim}\label{claim 1}
Let $t$ be a term in the language $\{\oplus, \odot, \wedge\}$, $\{\beta_1,\ldots,\beta_n\}\subseteq S$, $\ov \beta = (\beta_1,\ldots,\beta_n)$, and $t(\ov\beta)\in M$. If $t(\ov \beta)(a)>0$ for some $a \in X$, then there exists $j\in\{1,\ldots,n\}$ such that $\beta_j\in M$ and $\beta_j(a)>0$.
\end{claim}
\emph{Proof of Claim \ref{claim 1}.} Let us proceed by induction on the length of the term $t$. If $t$ has length 1, then $t(\ov \beta)=\beta$ with $\beta\in S$, and the condition clearly holds.

Suppose for inductive hypothesis that the assertion holds for all term of length $< m$, and let $t(\ov \beta)\in M$ be a term of length $m$ such that $t(\ov \beta)(a)>0$ for some $a \in X$. Since $t$ has length $m$ then $t=r \star s$, where $r$ and $s$ are terms of length $< m$ and $\star \in \{\wedge, \odot,\oplus\}$. Then we have to distinguish three cases.

If $t=r\wedge s$ then $t(\ov \beta)(a)=r(\ov \beta)(a)\wedge s(\ov \beta)(a)$, so $r(\ov \beta)(a)>0$ and $s(\ov \beta)(a)>0$ because $t(\ov \beta)(a)>0$. Furthermore, since $(r\wedge s)(\ov \beta) \in M$, by Claim \ref{ii)}, $r(\ov \beta)\in M$ or $s(\ov \beta)\in M$. Without loss of generality we can assume that $r(\ov \beta)\in M$, then for inductive hypothesis we have that there exists $j\in\{1,\ldots,n\}$ such that $\beta_j\in M$ and $\beta_j(a)>0$. So the assertion holds for $t= r\wedge s$.

If $t=r\odot s$ then $t(\ov \beta)(a)=r(\ov \beta)(a)\odot s(\ov \beta)(a)$, so $r(\ov \beta)(a)>0$ and $s(\ov \beta)(a)>0$ because $t(\ov \beta)(a)>0$. As in the previous case, $r(\ov \beta)\in M$ or $s(\ov \beta)\in M$ for Claim \ref{ii)}. Without loss of generality we can assume that $r(\ov \beta)\in M$, then for inductive hypothesis we have that there exists $j\in\{1,\ldots,n\}$ such that $\beta_j\in M$ and $\beta_j(a)>0$. So Claim \ref{claim 1} holds if $t= r \odot s$.

Last, if $t=r\oplus s$ then $t(\ov \beta)(a)=r(\ov \beta)(a)\oplus s(\ov \beta)(a)$, so $r(\ov \beta)(a)>0$ or $s(\ov \beta)(a)>0$ because $t(\ov \beta)(a)>0$. Furthermore $r(\ov \beta)\in M$ and $s(\ov \beta)\in M$, because $t(\ov \beta)\in M$, $r(\ov \beta),s(\ov \beta)\leq t(\ov \beta)$, and $M$ is an ideal. Without loss of generality we can assume that $r(\ov \beta)(a)>0$, then for inductive hypothesis we have that there exists $j\in\{1,\ldots,n\}$ such that $\beta_j\in M$ and $\beta_j(a)>0$. So the assertion holds also for $t= r\oplus s$, and this completes the proof of Claim \ref{claim 1}.

Now, from the representation of $\alpha_a$ in (\ref{alpha_a}) we have that $t_i(\beta_{i1}, \ldots, \beta_{in_i})\in M$ for all $i\in I$, because $t_i(\beta_{i1}, \ldots, \beta_{in_i})\leq \alpha_a$ for each $i\in I$ and $\alpha_a$ is an element of the ideal $M$.

Moreover, there exists $j\in I$ such that $t_j(\beta_{j1}, \ldots, \beta_{jn_j})(a)>0$ because $\alpha_a(a)>0$. Then, by Claim \ref{claim 1}, we have that there exists $\beta_a=\beta_{jk}\in M$ with $k\in\{1,\ldots,n_j\}$ such that $\beta_a(a)>0$. Therefore we get $n_a\beta_a(a)=1$ for some $n_a<\omega$.

It means that the family $\{n_a\beta_a\}_{a\in X}$ is a covering of $X$ which is contained in $T=M\cap S$. From the hypothesis about $S$ we have that $T$ has an additive subcover, so there exists a finite subset $\{n_{a_1}\beta_{a_1},\ldots,n_{a_t}\beta_{a_t}\}$ of $T$ such that $n_{a_1}\beta_{a_1}\oplus \cdots \oplus n_{a_t}\beta_{a_t}=1$. But this means that $M$ has an additive subcover too, which is a contradiction.

Therefore, our original collection $\mathfrak{F}$ must be empty, whence $X$ is compact.
\end{proof}

\begin{theorem}[Tychonoff-type Theorem for MV-Topologies]\label{tych}
  If $\{\langle X_i, \tau_i\rangle\}_{i\in I}$ is a family of compact MV-topological spaces, then so is their product space $\langle X, \tau_X\rangle$.
\end{theorem}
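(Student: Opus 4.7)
The plan is the standard Alexander-subbase route, which the two preceding lemmas have been designed precisely to accommodate. I would take the product MV-topology $\O_X$ together with its canonical subbase
$$S = \{\pi_i\fcou(\alpha) \mid \alpha \in \O_i,\ i \in I\}$$
from \eqref{S}, and verify the hypotheses of Lemma \ref{alex} for $S$.

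First I would check that $S$ is a \emph{large} subbase. Given $\pi_i\fcou(\alpha) \in S$ and $n < \omega$, we have $n\cdot \pi_i\fcou(\alpha) = \pi_i\fcou(n\alpha)$ because $\pi_i\fcou$ is an MV-homomorphism; and $n\alpha \in \O_i$ since $\O_i$ is closed under $\oplus$ (Definition \ref{mvtop}(iv)). Hence $n\cdot \pi_i\fcou(\alpha) \in S$, so $S$ is large.

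Next I would verify the subbase-cover condition of Lemma \ref{alex}: every open cover of $X$ consisting of elements of $S$ contains an additive subcover. But this is exactly the content of Lemma \ref{lem1}, since any such cover consists of functions of the form $\alpha \circ \pi_i$ with $\alpha \in \O_i$.

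With both hypotheses in place, Lemma \ref{alex} yields compactness of $\langle X, \O_X\rangle$, completing the proof. I do not anticipate any real obstacle: all the heavy lifting — the handling of $\oplus$, $\odot$, $\wedge$ combinations and the delicate inductive reduction from arbitrary open covers to subbase covers — has already been absorbed into Lemma \ref{alex}, while Lemma \ref{lem1} supplies the product-specific step. The only small point to be careful about is ensuring that $S$ is large, since Lemma \ref{alex} explicitly requires this; as shown above, the multiplicative closure is immediate from the fact that each $\O_i$ is closed under $\oplus$ and that $\pi_i\fcou$ preserves $\oplus$.
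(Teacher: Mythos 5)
Your proposal is correct and follows essentially the same route as the paper: Lemma \ref{lem1} supplies the subbase-cover condition for the canonical subbase $S$ of the product MV-topology, and Lemma \ref{alex} then gives compactness. In fact your explicit verification that $S$ is large (via $n\cdot\pi_i\fcou(\alpha)=\pi_i\fcou(n\alpha)$ with $n\alpha\in\O_i$) is a detail the paper's own proof leaves implicit, so your write-up is, if anything, slightly more complete.
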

\begin{proof}
  Let us consider as a subbase for the product MV-topology on $X$ the collection
  $$S=\{\pi_i\fcou(\beta):\beta\in \tau_i, i\in I\}.$$

$S$ is a large subbase; indeed, for each $n<\omega$, $n(\beta\circ\pi_i)=n\beta\circ\pi_i$, and $n\beta\in \tau_i$ whenever $\beta\in\tau_i$. By Lemma \ref{lem1}, any subcollection of $S$ that covers $X$ has an additive subcover. Then the compactness of $X$ follows from Lemma \ref{alex}.
\end{proof}

\begin{remark}\label{tych2}
Theorem \ref{tych} can be obtained also as a corollary of the following two results.
\end{remark}

\begin{theorem}\label{ultra}
Every MV-topological space $\la X,\tau\ra$ is compact if, and only if it is ultra-fuzzy compact in the sense of Lowen \cite{low3}, i.e., the topological space $\la X,\iota(\tau)\ra$ is compact.
\end{theorem}
\begin{proof}
The ``only if'' part is trivial. For what concerns the converse implication, suppose that $\la X,\iota(\tau)\ra$ is a compact topological space and $\{\alpha_i:i\in I\}$ is an open cover of $X$. For each $\beta\in \tau$ and $t\in [0,1)$, let $\beta_t=\{x\in X:\beta(x)>t\}$. Since the family $\{(\alpha_i)_{\frac{1}{2}}:i\in I\}$, is an open cover of the topological space $\la X,\iota(\tau)\ra$, there exists a finite subfamily $\{(\alpha_{i_1})_{\frac{1}{2}}, \ldots,(\alpha_{i_m})_{\frac{1}{2}}\}$ that covers $X$. This means $\{\alpha_{i_1}, \ldots,\alpha_{i_m}\}$ is an additive open cover of $\la X,\tau\ra$.
\end{proof}

\begin{theorem}\cite[Theorem 3.3]{low3}\label{low333}
  Let $\{\la X_i, \tau_i\ra\}_{i\in I}$ be a family of fuzzy topological spaces. The product space $\la\prod \limits_{i\in I}X_i,\tau\ra$ is ultra-fuzzy compact if and only if for all $i\in I$, $\la X,\tau_i\ra$ is ultra-fuzzy compact.
\end{theorem}

We conclude this section by stating the following corollary, which is an immediate consequence of Proposition \ref{MV-space is weak induced}.

\begin{corollary}\label{compcor}
An MV-topological space $\la X, \tau\ra$, is compact in the sense of Definition \ref{compact} iff $\la X, \iota(\tau)\ra$ or, that is the same, $\la X, j(\tau)\ra$ is a compact topological space.
\end{corollary}

\section{Some consequences of Tychonoff theorem.}
\label{cons}

Let us now briefly discuss some consequences of Theorem \ref{tych}. The first two results of the section are of independent interest, and necessary for establishing the subsequent two corollaries.

\begin{lemma}\label{t2}
The product of Hausdorff MV-topologies is Hausdorff.
\end{lemma}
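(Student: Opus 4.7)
The plan is the direct mimic of the classical argument, using the projection-pullbacks as separating opens. Given distinct $x = (x_i)_{i \in I}$ and $y = (y_i)_{i \in I}$ in $X = \prod_{i \in I} X_i$, first I would pick an index $j \in I$ with $x_j \neq y_j$. Since $\langle X_j, \O_j\rangle$ is Hausdorff, Definition \ref{t2ax} supplies opens $o_{x_j}, o_{y_j} \in \O_j$ with $o_{x_j}(x_j) = o_{y_j}(y_j) = 1$ and $o_{x_j} \wedge o_{y_j} = \0$.

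Next I would lift these to the product topology by setting $u_x \bydef \pi_j\fcou(o_{x_j}) = o_{x_j} \circ \pi_j$ and $u_y \bydef \pi_j\fcou(o_{y_j}) = o_{y_j} \circ \pi_j$. By construction these are elements of the subbase $S$ of $\O_X$ defined in (\ref{S}), hence they are open. A one-line computation gives $u_x(x) = o_{x_j}(x_j) = 1$ and $u_y(y) = o_{y_j}(y_j) = 1$.

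It remains to check $u_x \wedge u_y = \0$. For any $z = (z_i)_{i \in I} \in X$, evaluating pointwise yields
$$(u_x \wedge u_y)(z) = o_{x_j}(z_j) \wedge o_{y_j}(z_j) = (o_{x_j} \wedge o_{y_j})(z_j) = \0(z_j) = 0,$$
which finishes the verification. No serious obstacle is expected here: the argument is entirely formal because the MV-preimage $\pi_j\fcou$ is an MV-homomorphism (hence commutes with $\wedge$) and evaluation at a point factors through the projection. The only minor point to state carefully is that $u_x, u_y$ are indeed open sets of $\O_X$, which is immediate since they belong to the subbase $S$.
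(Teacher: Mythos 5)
Your proposal is correct and follows essentially the same argument as the paper: pick an index $j$ where the two points differ, separate the $j$-th coordinates using the Hausdorff property of $\langle X_j, \O_j\rangle$, and pull the separating opens back along $\pi_j$, checking the two conditions pointwise. The paper's proof is the same, only stated more briefly.
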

\begin{proof}
The proof proceeds analogously to the classical case with no major differences. Indeed, let $\{\langle X_i, \tau_i\rangle\}_{i\in I}$ be a family of Hausdorff MV-spaces, $\la X,\tau\ra$ its product space, and $(x_i)_{i \in I}, (y_i)_{i \in I}$ two distinct points of $X$. So there exists $j \in I$ such that $x_j \neq y_j$ and, since every $X_i$ is Hausdorff, there exist $o_x, o_y \in \tau_j$ such that $o_x(x_j) = o_y(y_j) = 1$ and $o_x \wedge o_y = \0$. Then it is not hard to see that the open sets $o_x \circ \pi_j$ and $o_y \circ \pi_j$ separate the given points of $X$, namely, $(o_x \circ \pi_j)((x_i)_{i \in I}) = (o_y \circ \pi_j)((y_i)_{i \in I}) = 1$ and $(o_x \circ \pi_j) \wedge (o_y \circ \pi_j) = \0$.
\end{proof}

\begin{lemma}\label{0dim}
The product of zero-dimensional MV-topological spaces is zero-dimen\-sional.
\end{lemma}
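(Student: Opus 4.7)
The plan is to lift clopen bases from the factors $X_i$ to a clopen base on the product, using that the canonical subbase for the product already has the shape $\pi_i\fcou(\alpha)$.

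First, for each $i \in I$ let $B_i$ be a base of clopen MV-subsets of $\langle X_i, \O_i \rangle$, so that $\beta \in B_i$ entails both $\beta \in \O_i$ and $\beta^* \in \O_i$. I would consider
\[
S' = \{\pi_i\fcou(\beta) \mid \beta \in B_i,\ i \in I\}
\]
as a candidate subbase for $\langle X, \O_X\rangle$. Each $\pi_i\fcou(\beta) \in S'$ is clopen in $X$: it is open because $\pi_i$ is continuous, and since $\pi_i\fcou$ is an MV-homomorphism we have $\pi_i\fcou(\beta)^* = \pi_i\fcou(\beta^*) \in \O_X$, so its complement is open as well.

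Second, I would verify that $S'$ is actually a subbase. The canonical subbase $S$ from equation (\ref{S}) consists of all $\pi_i\fcou(\alpha)$ with $\alpha \in \O_i$, and every such $\alpha$ is a join of elements of $B_i$; since $\pi_i\fcou$ preserves arbitrary suprema, each element of $S$ is a join of elements of $S'$. Because $\oplus$, $\odot$ and $\wedge$ distribute over arbitrary joins in $[0,1]^X$, any finite term $t_j\bigl(\pi_{i_1}\fcou(\alpha_1),\ldots,\pi_{i_{n_j}}\fcou(\alpha_{n_j})\bigr)$ occurring in a representation of an open via $S$ can be rewritten as a join of finite terms in elements of $S'$, showing that $S'$ meets Definition \ref{subbase}.

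Third, by Remark \ref{base-subbase} the closure $B_{S'}$ of $S'$ under $\oplus, \odot, \wedge$ is a base for $\O_X$, and it only remains to check that every element of $B_{S'}$ is clopen. This reduces to the observation that the collection of clopen MV-subsets of any MV-topological space is closed under $\oplus$, $\odot$ and $\wedge$, which is immediate from the De Morgan-type identities $(\alpha \oplus \beta)^* = \alpha^* \odot \beta^*$, $(\alpha \odot \beta)^* = \alpha^* \oplus \beta^*$ and $(\alpha \wedge \beta)^* = \alpha^* \vee \beta^*$, together with axioms (ii)--(v) of Definition \ref{mvtop} closing $\O$ under these operations and under arbitrary joins. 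The only delicate point is the bookkeeping in the second step, i.e.\ confirming that pushing joins of $\pi_i\fcou(\beta)$'s through the terms really yields a representation of the prescribed form; once the distributivity of $\oplus, \odot, \wedge$ over arbitrary joins in $[0,1]^X$ is invoked this is routine.
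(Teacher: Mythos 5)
Your argument is correct and is essentially the detailed version of the paper's own (one-line) proof: the paper likewise combines the subbase (\ref{S}), Remark \ref{base-subbase}, and the closure of clopens under $\oplus$, $\odot$, $\wedge$; you merely make explicit the passage to preimages of clopen basic sets of the factors and the distributivity over joins needed to see they form a subbase.
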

\begin{proof}
Since sums, products, and finite infima of clopens of an MV-topological space are clopens, the assertion follows immediately from (\ref{S}) and Remark \ref{base-subbase}.
\end{proof}

\begin{corollary}\label{stone}
The product of Stone MV-spaces is a Stone MV-space.
\end{corollary}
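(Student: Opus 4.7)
The plan is simply to assemble the three results that immediately precede the corollary. By analogy with the classical setting (and, one expects, by the definition given in \cite{rus2}), a Stone MV-space is an MV-topological space that is simultaneously compact, Hausdorff, and zero-dimensional. Once this characterization is in hand, the corollary is a one-line consequence of Theorem \ref{tych}, Lemma \ref{t2}, and Lemma \ref{0dim}.

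More precisely, I would start by letting $\{\langle X_i, \O_i\rangle\}_{i\in I}$ be a family of Stone MV-spaces and let $\langle X, \O_X\rangle$ denote its product space, equipped with the canonical projections $\pi_i$. Each $\langle X_i, \O_i\rangle$ is, by hypothesis, compact, Hausdorff, and zero-dimensional. Theorem \ref{tych} then gives compactness of $\langle X, \O_X\rangle$; Lemma \ref{t2} gives the Hausdorff property; and Lemma \ref{0dim} gives zero-dimensionality. Combining these three facts, $\langle X, \O_X\rangle$ satisfies all three defining conditions, so it is itself a Stone MV-space.

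There is really no substantive obstacle here: the work has all been done in the preceding theorem and lemmas. The only thing worth double-checking is the definition of Stone MV-space used in \cite{rus2}, to make sure it coincides with the conjunction ``compact $+$ Hausdorff $+$ zero-dimensional''; once confirmed, no further calculation is required. I would therefore write the proof as a two-sentence citation of Theorem \ref{tych}, Lemma \ref{t2}, and Lemma \ref{0dim}.
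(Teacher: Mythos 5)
Your proposal is correct and matches the paper's intended argument exactly: the paper leaves the corollary without an explicit proof precisely because, with a Stone MV-space defined in \cite{rus2} as a compact, Hausdorff (separated), zero-dimensional MV-topological space, it is the immediate conjunction of Theorem \ref{tych}, Lemma \ref{t2}, and Lemma \ref{0dim}. No difference in approach, and no gap.
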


\begin{corollary}\label{lcc}
The category $\MVlcc$, of limit cut complete MV-algebras and MV-algebra homomorphisms, has coproducts.
\end{corollary}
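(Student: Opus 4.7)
The plan is to invoke the Stone-type duality between the category $\SMV$ of Stone MV-spaces and the category $\MVlcc$ of limit cut complete MV-algebras, established in \cite{rus2}. Since this is a dual equivalence of categories, it exchanges limits and colimits; combined with Corollary \ref{stone}, which guarantees that $\SMV$ has products, this immediately yields the existence of coproducts in $\MVlcc$.

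More concretely, given a family $(A_i)_{i\in I}$ in $\MVlcc$, I would first apply the duality functor $\MVlcc \to \SMV^{\op}$ to obtain a corresponding family of Stone MV-spaces $(\langle X_i, \O_i\rangle)_{i\in I}$. By Corollary \ref{stone}, the product space $\langle X, \O_X\rangle$ of this family exists in $\SMV$ and comes equipped with continuous projections $\pi_i : X \to X_i$. Applying the inverse duality functor, these projections become MV-homomorphisms $\iota_i : A_i \to A$ into an lcc MV-algebra $A$, and the universal property of the product of Stone MV-spaces (Definition \ref{prod}) is translated, by contravariant functoriality of the equivalence, into the universal property characterising $\langle A,(\iota_i)_{i\in I}\rangle$ as the coproduct of $(A_i)_{i\in I}$ in $\MVlcc$.

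There is no genuine obstacle in this argument: the categorical fact that a dual equivalence exchanges products with coproducts is entirely formal, and Corollary \ref{stone} supplies the product in $\SMV$. The only point that might deserve a line of justification is that the duality of \cite{rus2} has the expected form on morphisms, i.e.\ that continuous maps of Stone MV-spaces correspond bijectively and functorially to MV-algebra homomorphisms between the associated lcc MV-algebras; once this is granted, the proof reduces to a single sentence invoking Corollary \ref{stone} together with the duality.
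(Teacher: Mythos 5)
Your proposal is correct and follows essentially the same route as the paper: the paper's proof likewise combines Theorem \ref{tych} with Lemmas \ref{t2} and \ref{0dim} (i.e.\ Corollary \ref{stone}, that products of Stone MV-spaces are Stone MV-spaces) and the duality between $\MVlcc$ and $\SMV$ from \cite[Theorem 4.9]{rus2}, the dual equivalence turning products into coproducts.
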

\begin{proof}
It is an immediate consequence of Theorem \ref{tych}, Lemmas \ref{t2} and \ref{0dim}, and the duality between $\MVlcc$ and $\SMV$ \cite[Theorem 4.9]{rus2}.
\end{proof}

It is important to observe that Corollary \ref{lcc} does not guarantee that the coproduct, in $\MV$, of lcc MV-algebras is lcc too. Moreover, as Mundici observed in \cite[Corollary 7.4]{mubook}, the classes of totally ordered, hyperarchimedean, simple, and semisimple MV-algebras are not preserved under coproducts in the category of MV-algebras.

In order to better understand coproducts of lcc MV-algebras we prove the following result.
\begin{proposition}\label{radcoprod}
Let $(A_i)_{i \in I}$ be a family of lcc MV-algebras, and let $A, A'$, and $A''$ be the coproducts of such a family in $\MVlcc$, $\MVs$, and $\MV$, respectively. Then we have $A \cong A' \cong A''/\Rad A''$.
\end{proposition}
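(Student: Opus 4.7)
The plan is to split the statement into the two isomorphisms $A' \cong A''/\Rad A''$ and $A \cong A'$, and handle them separately. For the first, I would invoke the fact that $\MVs$ is a reflective subcategory of $\MV$: the assignment $B \mapsto B/\Rad B$ defines a functor $\MV \to \MVs$ left adjoint to the inclusion $\MVs \hookrightarrow \MV$, with unit the canonical quotient $B \twoheadrightarrow B/\Rad B$. Since left adjoints preserve colimits, applying this reflection to the coproduct $A''$ computed in $\MV$ produces the coproduct in $\MVs$ of the family $(A_i/\Rad A_i)_{i\in I}$. As every lcc MV-algebra is semisimple, $A_i/\Rad A_i = A_i$ for all $i \in I$, and therefore $A' \cong A''/\Rad A''$.

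For $A \cong A'$, I would work on the dual side via \cite[Theorem 4.9]{rus2}. Since $A$ is lcc, and hence semisimple, the universal map $A'' \to A$ from the coproduct in $\MV$ factors through the quotient $A' = A''/\Rad A''$, producing a canonical surjection $\varphi: A' \twoheadrightarrow A$. To obtain its inverse, it suffices to show that $A'$ itself is lcc, as the universal property of the lcc coproduct applied to the coproduct inclusions $A_i \to A'$ would then yield a morphism $\psi: A \to A'$, and the compositions $\varphi \circ \psi$ and $\psi \circ \varphi$ must be the respective identities by uniqueness of the universal factorizations. The lcc-ness of $A'$ is precisely where Corollary \ref{stone} enters: Stone MV-spaces form a full subcategory of the category of compact Hausdorff MV-spaces which is closed under arbitrary products, so the product $\prod X_i$ of the Stone MV-spaces $X_i$ dual to the $A_i$, computed on the semisimple side (dual to $A'$), coincides with the product computed in $\SMV$ (dual to $A$), forcing $A' \cong A$.

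The main obstacle is precisely this identification of the two products, i.e.\ the comparison between the duality $\MVlcc \simeq \SMV^{\op}$ explicitly used in Corollary \ref{lcc} and the broader duality between semisimple MV-algebras and compact Hausdorff MV-spaces. Once this is in place, Corollary \ref{stone} does the heavy lifting by ruling out any semisimple completion strictly larger than the lcc coproduct, and everything reduces to the universal properties already invoked. The remaining verification, that lcc implies semisimple and hence that the $A_i$ are fixed by the semisimple reflection, is immediate from the definitions.
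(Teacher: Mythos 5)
Your first isomorphism is fine: invoking the semisimple reflection $B \mapsto B/\Rad B$ (left adjoint to $\MVs \hookrightarrow \MV$, hence coproduct-preserving) is just a categorical repackaging of what the paper verifies by hand, namely that any morphism from $A''$ into a semisimple algebra kills $\Rad A''$, so $A''/\Rad A''$ with the composite injections has the universal property in $\MVs$.

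The second half, however, has a genuine gap, and it sits exactly at the crux of the proposition. You assert that the factorization $A'\to A$ is a surjection, but give no argument: its image is only the MV-subalgebra of $A$ generated by the coproduct injections, and whether that is all of $A$ is precisely what is at stake (the paper does address this, tersely, with its ``surjective on a generating set of $A$'' step, combined with injectivity from the factorization through the lcc completion). Your substitute argument --- that the product of the Stone MV-spaces dual to the $A_i$, ``computed on the semisimple side (dual to $A'$)'', coincides with the product in $\SMV$, forcing $A'\cong A$ --- does not work, because there is no duality between all semisimple MV-algebras and compact Hausdorff MV-spaces that you can lean on: the duality of \cite[Theorem 4.9]{rus2} is only between $\MVlcc$ and $\SMV$. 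A semisimple algebra which is not lcc is not recovered from its associated MV-space; the clopen algebra of that space is the lcc completion $(A')^{\lcc}$, not $A'$. Consequently, any identification of spaces can at best yield $(A')^{\lcc} \cong A$ --- which is essentially \cite[Corollary 5.8]{rus2}, the step the paper starts from --- and cannot distinguish $A'$ from $(A')^{\lcc}$. Saying ``$A'$ is dual to the product'' therefore presupposes that $A'$ is lcc, which is the very thing you set out to prove; what is still missing is an argument that the canonical map $A' \to A$ (equivalently $A' \hookrightarrow (A')^{\lcc}$) is onto, i.e.\ that the $\MVs$-coproduct of lcc algebras is already limit cut complete. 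The paper closes this by showing $\mu: A' \to A$ is both onto (generation) and injective (since $\ov\mu\iota=\mu$ with $\ov\mu$ an isomorphism), and your proposal has no replacement for that step.
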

\begin{proof}
Let $(\mu_i)_{i \in I}$, $(\nu_i)_{i \in I}$, and $(\eta_i)_{i \in I}$ be, respectively, the embeddings of the given family in $A, A'$, and $A''$. For any semisimple MV-algebra $B$ and morphisms $(f_i: A_i \to B)_{i \in I}$, there exists a morphism $f: A'' \to B$ such that $f\eta_i = f_i$ for all $i \in I$. The semisimplicity of $B$ guarantees that $\ker f \subseteq \Rad A''$ and, therefore, there exists a morphism $g: A''/\Rad A'' \to B$ such that $g\pi = f$, where $\pi$ is the canonical projection of $A''$ over $A''/\Rad A''$. So, for all $i \in I$, $g\pi\eta_i = f\eta_i = f_i$. Therefore $A''/\Rad A''$ is the coproduct in $\MVs$ of $(A_i)_{i \in I}$, with embeddings $(\pi\eta_i)_{i \in I}$, whence $A' \cong A''/\Rad A''$.

Now, by \cite[Corollary 5.8]{rus2}, the lcc completion $(A')^{\lcc}$ of $A'$ is also a coproduct of the family $(A_i)_{i \in I}$ in $\MVlcc$. Therefore, if we denote by $\iota: A' \to (A')^{\lcc}$ the inclusion morphism, by $\mu: A' \to A$ the morphism such that $(\mu\nu_i)_{i \in I} = (\mu_i)_{i \in I}$, and by $\ov\mu$ the unique extension of $\mu$ to $(A')^{\lcc}$ as in \cite[Corollary 5.8]{rus2}, we get that $\ov\mu$ is an isomorphism for the essential uniqueness of coproducts in any given category, and $\mu$ is onto because it is surjective on a generating set of $A$. On the other hand, the families $(\mu_i)_{i \in I}$, $(\nu_i)_{i \in I}$, and $(\iota\nu_i)_{i \in I}$ are right-cancellable, for being epi-sinks. It follows that $\ov\mu\iota = \mu$, i.e., $\mu$ is injective too. Then $\mu$ is an isomorphism, and we get $A'= (A')^{\lcc}$, $\iota = \id_{A'}$, $\mu = \ov\mu$,  and $A \cong A'$. The diagram below will better illustrate the last part of the proof.
$$\xymatrix{
& A \ar@<.5ex>[rdd]^{\ov\mu^{-1}}& \\
A_i \ar[rd]_{\nu_i} \ar[ru]^{\mu_i}& &\\
& A' \ar[uu]^\mu \ar[r]_\iota & (A')^{\lcc} \ar@<.5ex>[luu]^{\ov\mu}
}
$$
\end{proof}

In \cite{low2} the author proved Tychonoff theorem for lattice-valued fuzzy topology. Theorem \ref{tych} obviously imply classical Tychonoff theorem because every classical topological space is an MV-topological space too. On the other hand, it is known that the same holds -- although less obviously -- for Lowen's result, as we show in the next proposition which can be easily deduced from the results in \cite{low3}.

We recall that there exists a categorical full embedding $\omega: \Top \to \LFuz$, of the category of topological spaces and continuous functions into the one of fuzzy topologies in the sense of Lowen, with fuzzy continuous functions, which associates, to each topological space, the so-called \emph{topologically generated} fuzzy topological space \cite{low}.

\begin{proposition}\label{lowth}
Lowen's Tychonoff theorem implies Tychonoff theorem.
\end{proposition}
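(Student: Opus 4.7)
The plan is to use the full embedding $\tilde\omega: \Top \to \Fuz$ as a translation device between classical and Lowen's fuzzy setting. Given a family $(X_i)_{i \in I}$ of compact topological spaces, I would first transport them to Lowen fuzzy topological spaces via $\tilde\omega$, apply Lowen's Tychonoff theorem there, and then pull the compactness back to $\Top$.

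More concretely, the steps I would carry out in order are the following. First, invoke from \cite{low3} the fact that $\tilde\omega$ preserves compactness, so that each $\tilde\omega(X_i)$ is a compact Lowen fuzzy topological space. Second, invoke from the same source that $\tilde\omega$ preserves products (as a full embedding admitting a left adjoint, hence continuous in the categorical sense), so that the product of the family $(\tilde\omega(X_i))_{i \in I}$ in $\Fuz$ coincides with $\tilde\omega\left(\prod_{i \in I} X_i\right)$. Third, apply Lowen's Tychonoff theorem to conclude that $\tilde\omega\left(\prod_{i \in I} X_i\right)$ is compact in Lowen's sense. Fourth, use the fact that $\tilde\omega$ also reflects compactness, i.e., a topological space $X$ is compact if and only if $\tilde\omega(X)$ is a compact Lowen fuzzy space; this yields compactness of $\prod_{i \in I} X_i$ in $\Top$, which is exactly the statement of the classical Tychonoff theorem.

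The main obstacle is really just one of citation and bookkeeping rather than genuine mathematical difficulty: the key facts (preservation of compactness, preservation of products, and reflection of compactness by $\tilde\omega$) are all established in \cite{low3}, and the proof reduces to citing them in the correct order. The one subtle point is to make sure that the notion of ``compactness'' used in Lowen's Tychonoff theorem matches the one for which $\tilde\omega$ enjoys the preservation/reflection properties; once this is ensured, the proof is essentially a one-line diagram chase through the embedding.
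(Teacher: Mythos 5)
Your proposal is correct and follows essentially the same route as the paper: transport the family through $\tilde\omega$, use that compactness is a \emph{good} property (so $\tilde\omega$ both preserves and reflects it), use that $\tilde\omega$ commutes with products, apply Lowen's theorem, and pull compactness back to $\Top$. The only discrepancy is bibliographic: the paper attributes the product-commutation of $\tilde\omega$ to \u{S}ostak \cite[Corollary 3.7]{sos2} (and goodness of compactness to \cite{low} and \cite{low3}), rather than deriving it from an adjunction argument or from \cite{low3} alone.
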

\begin{proof}
We need to prove that, if the product of every family of fuzzy compact topological spaces is fuzzy compact, then the product of every family of compact topological spaces is compact. In order to do that, we recall the following facts.
\begin{itemize}
\item Fuzzy compactness is a \emph{good} fuzzy topological property, namely, a topologically generated fuzzy topological space is compact iff the underlying topological space is compact \cite[Theorem 4.1]{low} and \cite[Theorem 2.1]{low3}.
\item The $\omega$ functor commutes with products \cite[Corollary 3.7]{sos2}.
\end{itemize}
Let $\{\la X_i, \tau_i\ra\}_{i \in I}$ be a family of compact topological spaces. Since compactness is a good property, the topologically generated fuzzy spaces of the family $\{\la X_i, \omega \tau_i\ra\}_{i \in I}$ are compact. On the other hand, the product of such fuzzy spaces is topologically generated by the product of the spaces $X_i$, because $\omega$ commutes with products. By Lowen's theorem, such a product is fuzzy compact. Then the product of the $X_i$ is compact, again, because compactness is a good property.
\end{proof}

Both classical and Lowen's Tychonoff theorems need the Axiom of Choice, which is known to be equivalent to classical Tychonoff theorem in ZF \cite{kel}. Therefore, the following equivalence holds.

\begin{theorem}\label{equiv}
The following statements are equivalent in ZF:
\begin{enumerate}[(a)]
\item the Cartesian product of a non-empty family of non-empty sets is non-empty (AC);
\item the product space of compact topological spaces is compact \cite{ty};
\item the product space of fuzzy compact topological spaces (in the sense of \cite{low}) is fuzzy compact \cite{low2};
\item the product space of compact MV-topological spaces is compact (Theorem \ref{tych}).
\end{enumerate}
\end{theorem}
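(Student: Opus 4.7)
The strategy is to close a cycle of implications that covers all four statements. Kelley's theorem~\cite{kel} provides the classical equivalence (a)$\liff$(b). The two bridges (a)$\To$(d) and (a)$\To$(c) are, respectively, Theorem~\ref{tych} above (whose only non-constructive ingredient is the invocation of Zorn's Lemma inside the proof of Lemma~\ref{alex}) and Lowen's Tychonoff theorem~\cite{low2}. The implication (c)$\To$(b) is Proposition~\ref{lowth} above. Consequently, the only implication still requiring work is (d)$\To$(b); once it is in place, the two chains (a)$\To$(d)$\To$(b)$\To$(a) and (a)$\To$(c)$\To$(b)$\To$(a) yield the full equivalence in ZF.

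For (d)$\To$(b), my plan is to exhibit classical topology as a special case of MV-topology. Given a topological space $\langle X,\tau\rangle$, set $\O_\tau\bydef\{\chi_U\mid U\in\tau\}\subseteq[0,1]^X$. On characteristic functions one has the elementary identities $\chi_U\oplus\chi_V=\chi_{U\cup V}=\chi_U\vee\chi_V$ and $\chi_U\odot\chi_V=\chi_{U\cap V}=\chi_U\wedge\chi_V$, so the axioms of Definition~\ref{mvtop} collapse to the usual topological axioms and $\langle X,\O_\tau\rangle$ is an MV-topological space. Compactness is preserved in both directions: a crisp open cover $\{\chi_{U_i}\}$ with $\bigvee\chi_{U_i}=\1$ is just a classical cover $\{U_i\}$ of $X$, and an additive subcover $\chi_{U_{i_1}}\oplus\cdots\oplus\chi_{U_{i_n}}=\1$ is exactly a finite subcover $U_{i_1}\cup\cdots\cup U_{i_n}=X$. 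Moreover, the subbase in (\ref{S}) for a product of the crisp MV-spaces $\langle X_i,\O_{\tau_i}\rangle$ consists precisely of the characteristic functions $\chi_{\pi_i^{-1}(U)}$, i.e.\ of the standard subbase of the classical product topology, so the product MV-topology and the classical product topology agree on crisp inputs. Applying (d) to the family $(\langle X_i,\O_{\tau_i}\rangle)_{i\in I}$ coming from compact topological spaces therefore yields classical compactness of $\prod X_i$, which is (b).

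The main thing to verify is that the dictionary $\tau\mapsto\O_\tau$ really is compatible with all the structure appearing in Theorem~\ref{tych}: the MV-topology axioms, the notion of compactness, and the subbase presentation of product MV-topologies. Each of these reductions is a short computation that relies only on the Boolean identities among $\oplus,\odot,\wedge$ on $\{0,1\}$, so there is no genuine analytical obstacle; the proof is essentially bookkeeping on top of the citations of Kelley, Lowen, Theorem~\ref{tych}, and Proposition~\ref{lowth}.
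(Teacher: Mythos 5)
Your proposal is correct and follows essentially the same route as the paper: the cycle through Kelley's theorem for (b)$\Rightarrow$(a), Theorem~\ref{tych} and Lowen's theorem for (a)$\Rightarrow$(d) and (a)$\Rightarrow$(c), Proposition~\ref{lowth} for (c)$\Rightarrow$(b), and the observation that classical spaces are (crisp) MV-spaces for (d)$\Rightarrow$(b). The only difference is that the paper treats the last reduction as obvious, whereas you spell out the bookkeeping with characteristic functions, compactness, and the product subbase --- a harmless and correct elaboration.
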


\section{Compactification}
\label{compactification}

In 1981, Cerruti \cite{cerr} studied some concepts of fuzzy topological spaces from the categorical point of view, and developed a compactification theory. In order to do that, he showed the existence of a left adjoint functor to the embedding $e:\HCAFuz\lto \Fuz$ where $\HCAFuz$ is the category of compact Hausdorff weakly induced spaces. We present an analogous categorical proof on the MV-topological spaces in the present section.

In what follows, $\CTMV$ will denote the full subcategory of $\TMV$ whose objects are compact MV-spaces, and $\HCTMV$ the full subcategory whose objects are compact Hausdorff MV-spaces.

\subsubsection*{The Stone-\v Cech Compactification}

\begin{lemma}\cite{rus2}\label{closcomp}
A closed subspace $(Y, \tau_Y)$ of a compact (respectively: strongly compact) space $(X, \tau)$ is compact (resp.: strongly compact).
\end{lemma}
\begin{proof}
Since $Y$ is a subspace, in particular it is a crisp subset of $X$ and, therefore, all of its open sets are of the form $\alpha\restr_{Y}$ with $\alpha \in \tau$. So let $\{\alpha_i\}_{i \in I} \subseteq \tau$ such that $\bigvee_{i \in I} \alpha_i \geq Y$. Since $Y$ is closed, $Y^*$ is open and $\{\alpha_i\}_{i \in I} \cup \{Y^*\}$ is an open covering of $X$. By compactness of $X$, there exists a finite family $\{\alpha_j\}_{j=1}^n$ of elements of $\{\alpha_i\}_{i \in I}$ such that $\alpha_1 \oplus \cdots \oplus \alpha_n \oplus Y^* = X$. Then, since $Y \wedge Y^* = \tau$, we have (with a slight abuse of notation) $Y = Y \wedge (\alpha_1 \oplus \cdots \oplus \alpha_n) = (Y \wedge \alpha_1) \oplus \cdots \oplus (Y \wedge \alpha_n)$, the latter equality easily following from the properties of Boolean elements of MV-algebras, whence $Y$ is compact.

The case of strong compactness is completely analogous.
\end{proof}

\begin{lemma}\label{equalizer closed if Y is Hausdorff}
  Let $\la X,\tau_X\ra$ and $\la Y,\tau_Y\ra$ be MV-spaces and let $f,g: X\to Y$ be continuous functions. If $Y$ is a Hausdorff MV-space then the set $$Z=\{x\in X:f(x)=g(x)\}$$ is a closed crisp subset.
\end{lemma}
\begin{proof}
  Let $x\in X\setminus Z$, so $f(x)\neq g(x)$. Since $Y$ is Hausdorff, there exist $\alpha,\beta\in\tau_Y$ such that $\alpha(f(x))=\beta(g(x))=1$ and $\alpha\wedge\beta=\0$. Moreover, $f$ and $g$ are continuous, so we have that $\alpha\circ f$ and $\beta\circ g$ are open sets of $(X,\tau_X)$. Set $\gamma_x=(\alpha\circ f)\wedge(\beta\circ g)$. Then $\gamma_x\in \tau_X$, $\gamma_x(x)=1$, and
	$$\gamma_x(z)=((\alpha\circ f)\wedge(\beta\circ g))(z)= \alpha(f(z))\wedge\beta(g(z))=(\alpha\wedge\beta)(f(z))=0,$$
	for each $z\in Z$
	
  It follows that $Z^*= \bigvee_{x\in X\setminus Z}\gamma_x\in\tau_X$, whence $Z$ is closed.
\end{proof}

\begin{proposition}The category $\HCTMV$ satisfies the following properties:
  \begin{enumerate}[(a)]
    \item $\HCTMV$ has all products.
    \item $\HCTMV$ has equalizer.
    \item $\HCTMV$ has a small cogenerator.
  \end{enumerate}
\end{proposition}

\begin{proof}
  \begin{enumerate}[(a)]
     \item Follows from Theorem \ref{tych} and Lemma \ref{t2}.
     \item Let $f,g:X\lto Y$ be morphisms in $\HCTMV$. Seeing these morphisms in $\Set$, we know that $Z=\{x\in X: f(x)=g(x)\}$ is the equalizer of them. Now, since $Y$ is a Hausdorff space, $Z$ is closed in $X$ by Lemma \ref{equalizer closed if Y is Hausdorff}. So $Z$ is a compact MV-space (Lemma \ref{closcomp}) and the canonical injection $m:Z\lto X$ is the equalizer. Note that $Z$ is an element of $\HCTMV$.
     \item Let us consider the interval $I$ in $\Top$ with the usual topology and show that the cogenerator in $\HCTMV$ is $e(I)$. Indeed, let $X$ be an element of $\HCTMV$, $x,y\in X$, $x\neq y$. As $\iota(X)$ is a compact Hausdorff space, there exists a morphism $f:\iota(X)\lto I$ in $\Top$ such that $f(x)\neq f(y)$. By Proposition \ref{functors omega y iota en MVtop prope}, $f:X\lto e(I)$ is a morphism in $\TMV$.
   \end{enumerate}
\end{proof}

Let now $i$ be the inclusion functor $\HCTMV\hookrightarrow \TMV$. From (a) and (b) of the last proposition we have that $\HCTMV$ is small-complete and we obtain the following corollary.
\begin{theorem}
  The functor $i: \HCTMV\lto \TMV$ has a left adjoint.
\end{theorem}
\begin{proof}
$\HCTMV$ is small-complete and has a small cogenerator, therefore the assertion follows from the Special Adjoint Functor Theorem.
\end{proof}

We denote by $\widehat{\beta}:\TMV\lto\HCTMV$ the left adjoint functor of $i$, and the adjunction by $\widehat{\beta}\dashv i$. Note that $\HCTMV$ is a reflective subcategory of $\TMV$, then we have that each object $X$ of $\HCTMV$ is isomorphic to its reflection, that is, $X\simeq \widehat{\beta} (X)$.

We will show now that $\widehat{\beta}$ is the natural extension of the classical Stone-\v Cech compactification to the category $\TMV$. In what follows, $\Top$ and $\HCTop$ will denote the categories of topological spaces and Hausdorff compact topological spaces, respectively, both with the usual morphisms, and $\beta$ shall denote the Stone-\v Cech compactification functor between them.

\begin{theorem}\label{adjunctions compactification}
The functors $\beta$ and $\iota\widehat{\beta}\omega$ are naturally isomorphic.
\end{theorem}
\begin{proof}
  Let us consider the following adjunctions:
  \begin{enumerate}[(i)]
    \item $\omega$ is a left adjoint of $\iota\restr_{\TMV}: \TMV\lto\Top$ (see Proposition \ref{functors omega y iota en MVtop prope} (ii)),
    \item $\widehat{\beta}$ is a left adjoint of $i$, and
    \item $\iota\restr_{\HCTMV}$ is a left adjoint of $e$ (see Proposition \ref{functors omega y iota en MVtop prope} (iv)).
  \end{enumerate}
  From the following compositions, we obtain the adjunction $\iota\widehat{\beta}\omega\dashv \iota i e$:
  $$\Top\stackrel{\omega}{\lto}\TMV\stackrel{\widehat{\beta}}{\lto}\HCTMV\stackrel{\iota}{\lto}\HCTop,$$
	$$\HCTop\stackrel{e}{\lto}\HCTMV\stackrel{i}{\lto}\TMV\stackrel{\iota}{\lto}\Top.$$
	
  On the other hand, $\iota i e=i:\HCTop\lto \Top$, therefore $\iota\widehat{\beta}\omega$ is a left adjoint of the embedding of $\HCTop$ in $\Top$, then it is naturally isomorphic to $\beta$.
\end{proof}

Let us now show that, for an MV-space $X$, the initial topology of $X$ determines the initial topology of the \emph{MV-compactification} of $X$.

\begin{theorem}\label{iota de compacti es compactif de iota}
For each $X$ in $\TMV$, $\iota\widehat{\beta}(X)\cong \beta\iota(X).$
\end{theorem}
\begin{proof}
 First of all, the adjunction $\iota\widehat{\beta}\dashv i e$ can be obtained by composing the adjunctions $\widehat{\beta}\dashv i$ and $\iota\dashv e$ as follows:
 $$\TMV\stackrel{\widehat{\beta}}{\lto}\HCTMV\stackrel{\iota}{\lto}\HCTop,$$
 $$\HCTop\stackrel{e}{\lto}\HCTMV\stackrel{i}{\lto}\TMV.$$
It is enough to show that the restriction of $\beta\iota$ on $\TMV$ is a left adjoint of $ie=e.$

From Proposition \ref{functors omega y iota en MVtop prope} (iv), we have that $\iota\restr_{\TMV}: \TMV\lto \Top$ is a left adjoint of $e$, that is, for all $\la X,\tau\ra$ in $\TMV$ and for all $\la Y,\delta\ra$ in $\Top$, $$\hom_{\Top}(\iota(X),Y)=\hom_{\TMV}(X,e(Y)).$$
Since $\hom_{\Top}(\iota(X),Y)\cong\hom_{\Top}(\beta\iota(X),Y)$, then $$\hom_{\TMV}(X,e(Y))\cong\hom_{\Top}(\beta\iota(X),Y),$$ whence the thesis follows.
\end{proof}

As a consequence of the last result we have that, for each MV-space $X$, the canonical morphism $\eta_X:X\lto\widehat{\beta}(X)$ has the same underlying map of the canonical morphism $\iota(X)\lto\beta\iota(X).$ %Besides, $\C$ is an epireflective subcategory of \textbf{HMV-TOP}.

Let us also observe explicitly that $\widehat{\beta}$ is basically the restriction of the functor $\widetilde{\beta}$ introduced by Cerruti in \cite{cerr}, since it is just the left adjoint to $i$, which is the restriction of the functor $e:\HCAFuz\lto \Fuz$ to MV-topological spaces. So we have the following commutative diagram:
$$\xymatrix{
\Top \ar[r]^\omega \ar[d]_\beta & \TMV \ar[r]^\subseteq \ar[d]^{\widehat{\beta}} & \Fuz \ar[d]^{\widetilde{\beta}} \\
\HCTop \ar[r]_\omega & \HCTMV \ar[r]_\subseteq & \HCAFuz
}$$

Finally, we have the following result:
\begin{theorem}\label{topgencomp}
  \begin{enumerate}[(i)]
    \item $\widehat{\beta} e=e\beta.$
    \item If $X$ is topologically generated then $\widehat{\beta}(X)=\omega\beta\iota(X).$
  \end{enumerate}
\end{theorem}
\begin{proof}
  \begin{enumerate}[(i)]
  \item Trivial.
  \item [(ii)] Let $X$ be a topologically generated space, then $X=\omega\iota(X)$ by Proposition \ref{properties of functors iota, omega, etc} (iv). Given a morphism $\varepsilon_X:\iota(X)\lto \beta\iota(X)$, we have that $$\varepsilon_X\in \hom_{\TMV}(\omega\iota(X),\omega\beta\iota(X))=\hom_{\TMV}(X,\omega\beta\iota(X)).$$
  Since $\omega\beta\iota(X)$ is an object of $\HCTMV$, there exists a unique $f$ which makes the following diagram commute
  \begin{equation*}
\xymatrix{
X \ar@{->}[rr]^{\eta_X} && \widehat{\beta}(X) \\
&&\\
&& \omega\beta\iota(X) \ar@{<--}[uu]_f \ar@{<-}[lluu]^{\varepsilon_X}\\
}
\end{equation*}

In other words, $f\in\hom_{\TMV}(\widehat{\beta}(X), \omega\beta\iota(X))$ and, from Proposition \ref{functors omega y iota en MVtop prope}(v,vi), we have that $\widehat{\beta}(X)\in\omega(\Top)$. Thus $\widehat{\beta}(X)=\omega\iota\widehat{\beta}(X)$, and $\widehat{\beta}(X)=\omega\iota\widehat{\beta}(X)=\omega\beta\iota(X)$ by the inclusion $\omega(\Top)\subseteq \TMV$ and Theorem \ref{iota de compacti es compactif de iota}.
  \end{enumerate}
\end{proof}

From Theorem \ref{topgencomp}(ii) it follows that $\widehat{\beta}\omega=\omega\beta$. Indeed, for each $X\in \TMV$, $\widehat{\beta}\omega(X)=\omega\beta\iota\omega(X)=\omega\beta(X)$.

On the other hand, using (v) and (iv) of Proposition \ref{functors omega y iota en MVtop prope}, we also get that an MV-space $X$ is topologically generated whenever $\widehat{\beta}(X)$ is topologically generated, because $\eta_X$ is an element of $\hom_{\TMV}(X,\widehat{\beta}(X))$.

From the results of this section, it follows that $\widehat{\beta}$ is an extension of the classical Stone-\v Cech Compactification $\beta$, and that the two functors enjoy similar properties.

Here we concentrated on the Stone-\v Cech compactification functor from the category of {\bf all} MV-topological spaces to the one of compact Hausdorff ones and, therefore, the main reference for our construction was \cite{cerr}. Compactifications of fuzzy topological spaces have been studied, with a different approach, also in \cite{martin0, martin} and \cite{martin2}. In those papers, the author presents several results on how to homeomorphically and densely embed fuzzy topological spaces into compact separated ones. Here follows some considerations about our work and Martin's ones.

First of all, it is important to remark that MV-topologies are very well-behaved w.r.t. the fuzzyfication of various classical topological concepts. For example, for any MV-topological space, the properties of being Hausdorff (Definition \ref{t2ax}), ultra Hausdorff \cite[Definition 3.1]{martin0}, and $T_2$ \cite[Definition 6.4]{puliu} are all equivalent, and also the various definitions of compactness, except strong compactness as defined in \cite{rus2}, collapse to a single property. The former equivalence can be easily seen thanks to Remark 3.8 in \cite{rus2}, Proposition \ref{MV-space is weak induced}, and the fact that the support of any fuzzy open set is open itself in any MV-topology (this fact is contained in the proof of Proposition \ref{MV-space is weak induced}), while the latter is a consequence of Proposition \ref{MV-space is weak induced} and \cite[Theorem 2.3]{martin}.

Now, in order to understand the (fuzzy) topological behaviour of Stone-\v Cech compactification for MV-topologies, namely, to determine in which cases an MV-topolog\-ical space homeomorphically embeds into $\widehat \beta X$, it is important to recall that there exist various fuzzy versions of the $T_0$ separation axiom. As stated by the second author in the first work on MV-topologies, one of the main motivations for the introduction of such spaces was to have a well-behaved extension of the category of classical topological spaces, namely, a category of fuzzy topological spaces which contained all classical topologies as a full subcategory and in which the fuzzy versions of the main topological properties could coincide with the original ones for ordinary topologies. Therefore, we believe that the fuzzy $T_0$ axiom which is most suitable for MV-topologies is the one introduced in \cite{sri}:
\begin{definition}\label{t0def}
A fuzzy topological space $\la X, \tau \ra$ is said to be \emph{fuzzy $T_0$} iff for any two distinct points $x, y \in X$, there exists $\alpha \in \tau$ such that either $\alpha(x)= 1$ and $\alpha(y) =0$ or $\alpha(y)= 1$ and $\alpha(x)=0$.
\end{definition}
In \cite{sri}, the authors compared such definition with those presented in \cite{hure} and \cite{puliu}, and they found out that, while the three fuzzy $T_0$ axioms are independent in the case of Chang's fuzzy topologies, the one in their paper implies the other two in the case of Lowen's fuzzy spaces. It is worth remarking also that both the axioms in \cite{hure} and \cite{puliu} imply that no classical topological space is fuzzy $T_0$. Let us also stress out the following important properties, whose proofs are straightforward and will be omitted:
\begin{lemma}\label{t0}
\begin{enumerate}
\item[(i)] An MV-topological space $\la X, \tau\ra$ is fuzzy $T_0$ iff for any two distinct points $x, y \in X$, there exists $\alpha \in \tau$ such that either $\alpha(x) > 0$ and $\alpha(y) =0$ or $\alpha(y) > 0$ and $\alpha(x)=0$.
\item[(ii)] An MV-topological space $\la X, \tau\ra$ is fuzzy $T_0$ iff its initial topology is $T_0$.
\end{enumerate}
\end{lemma}

Last, we recall that a fuzzy topological space is \emph{ultra completely regular} if its initial topology is completely regular \cite{martin2}.

Now remembering once again that the functors $\iota$ and $j$ concide on MV-topologies, i.e., the initial topology of an MV-topology $\la X, \tau\ra$ is simply the family of crisp open sets of $\tau$, we easily get the following characterization.

\begin{theorem}\label{t2mart}
Let $\la X, \tau\ra$ be an MV-topological spaces. The following are equivalent:
\begin{enumerate}
\item[(a)] $X$ is homeomorphically embeddable in $\widehat \beta X$;
\item[(b)] $X$ is fuzzy $T_0$ and ultra completely regular;
\item[(c)] $\iota(X)$ is $T_0$ and completely regular (i.e., is a Tychonoff space);
\item[(d)] $\iota(X)$ is homeomorphically embeddable in $\beta\iota(X)$.
\end{enumerate}
\end{theorem}
\begin{proof}
The equivalence between (b) and (c) follows readily from the definition of ultra complete regularity and Lemma \ref{t0}.

Moreover, it is immediate to see that an MV-topology $\la X, \tau \ra$ is $T_2$ in the sense of Definition \ref{t2ax} iff its initial topology is Hausdorff. Since a Tychonoff space is Hausdorff, also a $T_0$ ultra completely regular MV-topology is $T_2$. Therefore, the equivalence between (a) and (b) is just an immediate application of \cite[Theorem 4.7]{martin2}.

Last, the equivalence between (c) and (d) is a well-known result about classical Stone-\v Cech compactification.
\end{proof}
%\end{proposition}
%\begin{proof}
%First of all, it follows from the remarks right after Definition 3.7 in \cite{rus2}, that the $T_2$ property for fuzzy topologies defined in \cite{puliu} is equivalent to the Hausdorff property (Definition \ref{t2ax}) for all MV-topological spaces. Therefore, by the results of the present section, any Hausdorff MV-topological space $\la X, \tau \ra$ has a $T_2$-fuzzy compactification. Then, by \cite[Theorem 4.7]{martin2}, $\la X, \tau \ra$ is ultra completely regular.
%\end{proof}

\end{document}